\newcommand{\diam}{{\operatorname{diam}}}
\newcommand{\simp}{{\textrm{simp}}}
\newcommand{\sphere}{{\textrm{sph}}}
\newcommand{\poly}{{\textrm{poly}}}
\newtheorem{introtheorem}{Theorem}
\newtheorem{theorem}{Theorem}[section]
\newtheorem{proposition}[theorem]{Proposition}
\newtheorem{lemma}[theorem]{Lemma}
\newtheorem{corollary}[theorem]{Corollary}
\newtheorem{remark}[theorem]{Remark}
\theoremstyle{definition}
\newtheorem{definition}[theorem]{Definition}
\newtheorem{example}[theorem]{Example}
\newcommand{\R}{\mathbb R}
\newcommand{\calC}{\mathcal C}
\newcommand{\calS}{\mathcal S}
\newcommand{\f}{\hspace{-1pt}f\hspace{-1.5pt}} 
\DeclareMathOperator{\link}{link}
\DeclareMathOperator{\conv}{conv}
\DeclareMathOperator{\vertices}{vertices}
\DeclareMathOperator{\st}{star}
\title
{Topological prismatoids and small simplicial spheres of large diameter}
\author{Francisco Criado \and
Francisco Santos}
\address[F.~Santos]
{
{Departamento de Matem\'aticas, Estad\'istica y Computaci\'on}\\
{Universidad de Cantabria, E-39005 Santander, Spain.}
E\_mail: francisco.santos@unican.es
}
\address[F.~Criado]
{
Institut f\"ur Mathematik\\
Technische Universit\"at Berlin,
Straße des 17. Juni 136,
10623 Berlin, Germany
E\_mail: criado@math.tu-berlin.de
}
\begin{document}

\thanks
{This work is supported by project MTM2017-83750-P of the Spanish Ministry of Science (AEI/FEDER, UE) and grant EVF-2015-230 of the Einstein Foundation Berlin. Work of F.~Criado is supported by the Berlin Mathematical School}

\begin{abstract}
We introduce topological prismatoids, a combinatorial abstraction of the (geometric) prismatoids recently introduced by the second author to construct counter-examples to the Hirsch conjecture. We show that the ``strong $d$-step Theorem'' that allows to construct such large-diameter polytopes from ``non-$d$-step'' prismatoids still works at this combinatorial level. Then, using metaheuristic methods on the flip graph, we construct four combinatorially different non-$d$-step $4$-dimensional topological prismatoids with $14$ vertices. This implies the existence of $8$-dimensional spheres with $18$ vertices whose combinatorial diameter exceeds the Hirsch bound. These examples are  smaller that the previously known examples by Mani and Walkup in 1980 ($24$ vertices, dimension $11$).

Our non-Hirsch spheres are shellable but we do not know whether they are realizable as polytopes.
\end{abstract}

\keywords{Simplicial complex, simplicial sphere, combinatorial diameter, Hirsch conjecture}
\subjclass[2000]{52B05, 52B12, 90C60, 90C05}

\maketitle

\section{Introduction}
\label{sec:intro}

One of the most important open questions in polytope theory is how big can the graph-diameter of a polytope $P$ be in terms of its dimension $d$ and number $n$ of facets. The gap between the known lower and upper bounds for this function, that we denote $H_\poly(n,d)$, is extremely big: no polynomial upper bound for it is known, and no polytope is known whose diameter exceeds $1.05(n-d)$.

It has been known for more than 50 years~\cite{KlWa67:d-step} that $H_\poly(n,d) \le H_\poly(2n-2d,n-d)$. In practice, this means that, to answer the diameter question, one can restrict to the case $n=2d$.
The famous Hirsch conjecture from 1957 stated that $H_\poly(n,d) \le n-d$.
The conjecture is now disproved~\cite{San12:counterexample} but known counter-examples to it are still rare.
We call such counter-examples \emph{non-Hirsch polytopes}.

The problem can be addressed topologically, by looking at simplicial \emph{spheres}.
In this context we denote by $H_\sphere(n,d)$ the greatest diameter of the \emph{adjacency graph} of all simplicial $(d-1)$-spheres with $n$ vertices.
We call such a sphere \emph{non-Hirsch} if this diameter exceeds $n-d$.
Since $H_\poly(n,d)$ is known to be attained at some \emph{simple} polytope for every $n$ and $d$, we have that $H_\sphere(n,d) \ge H_\poly(n,d)$: for a simple $P$ attaining $H_\poly(n,d)$, the boundary complex of the polar of $P$ is a sphere showing the inequality.
Even though there is no reason to believe that these two functions coincide for every value of $n$ and $d$, one expects their asymptotics to be similar.
(For example, all known upper bounds for diameters of polytopes hold also for spheres, including the Klee-Walkup result that $H_\sphere(n,d) \le H_\sphere(2n-2d,n-d)$. See Proposition~\ref{prop:dstep}).

The Hirsch bound $H_\poly(n, d) \le n-d$ is only known to hold for $n-d \le 6$~\cite{BrSc:diameter-few-facets}, $n\le 12$~\cite{BDHS13:more-bounds} and $d\le 3$, but the smallest known non-Hirsch polytopes have $d=n-d=20$~\cite{MSW15:improved}.
The smallest known non-Hirsch sphere previous to our work was constructed by Mani and Walkup~\cite{MaWa80:counterexample}. It is a shellable $11$-sphere with $24$-vertices ($d=n-d=12$) shown to be non-polytopal by Altshuler~\cite{Alt85:nonpolytopal}.

The main outcome of our work is the construction of non-Hirsch $(d-1)$-spheres with $d=n-d=9$.

\begin{theorem}
\label{thm:main-intro}
There exist non-Hirsch \ $8$-spheres with $18$ vertices. That is,
\[H_\sphere(18,9) > 9.\]
\end{theorem}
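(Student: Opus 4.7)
The plan is to deploy the strong $d$-step theorem for topological prismatoids (promised above as Proposition~\ref{prop:dstep}): that statement converts a non-$d$-step $d$-dimensional topological prismatoid with $n$ vertices into a simplicial sphere exceeding the Hirsch bound, and the parameters $(d,n)=(4,14)$ are calibrated so that the output is an $8$-sphere with $18$ vertices of combinatorial diameter at least $10>9$. Hence Theorem~\ref{thm:main-intro} reduces to the purely combinatorial existence claim that there exists a $4$-dimensional topological prismatoid on $14$ vertices whose width, i.e.\ the dual-graph distance between the two bases of the underlying simplicial $3$-sphere, exceeds $4$. My first task would therefore be to check the arithmetic of the lift and make sure the output parameters of Proposition~\ref{prop:dstep} are exactly those claimed in the theorem.

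The second and main task is to produce such a $4$-prismatoid. My approach is a computational search on the flip graph of $4$-prismatoids with $14$ vertices. Starting from an easy initial prismatoid, for instance a symmetric $7+7$ construction obtained from two disjoint tetrahedra plus middle vertices, I would perform bistellar flips that preserve the prismatoid structure---in particular, that do not create new simplices straddling the two bases in a way that shortcuts the width---and steer the walk by an objective function rewarding larger base-to-base distance. Because this flip graph is enormous and the width function has many plateaus, the natural tools are metaheuristics (simulated annealing, tabu search with random restarts, possibly enhanced by symmetry-breaking or by occasional long escape moves), after each accepted flip recomputing the width via a shortest-path query in the facet-adjacency graph of the sphere.

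The hard part is that nothing guarantees the search will succeed: \emph{a priori} every $4$-prismatoid on $14$ vertices might have width $\le 4$, and even if examples with width $\ge 5$ exist, the basins of local maxima of width exactly $4$ may dominate the landscape. This is the real crux of the argument, and the announcement in the abstract that four pairwise non-isomorphic examples were eventually found indicates that a carefully tuned heuristic (with good flip primitives, efficient width recomputation, and aggressive diversification) does succeed. Once a candidate prismatoid is produced, verification is a finite check---the simplicial-sphere axioms, the prismatoid structure, and the width bound---after which a direct application of Proposition~\ref{prop:dstep} closes the proof.
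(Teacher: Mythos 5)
Your high-level strategy matches the paper's: reduce Theorem~\ref{thm:main-intro} to the existence of a small non-$d$-step topological prismatoid, find such a prismatoid by a metaheuristic search on the flip graph, and then apply the strong $d$-step theorem for topological prismatoids. However, there are two substantive issues.

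First, the width threshold is off by one, and this matters. The relevant result is Theorem~\ref{thm:dstep_tprism} (not Proposition~\ref{prop:dstep}, which is the Klee--Walkup $d$-step inequality for spheres). In the paper's notation a topological prismatoid of dimension $d-1$, so a ``$4$-prismatoid'' corresponds to $d=5$, and the theorem produces a non-Hirsch sphere only if the width $l$ satisfies $l>d=5$, i.e.\ $l\ge 6$. With $n=14$ and $d=5$ the output is indeed an $8$-sphere on $18$ vertices, and its diameter is $\ge l+n-2d=l+4$, which exceeds the Hirsch bound $N-D=9$ exactly when $l\ge 6$. Your threshold ``width $>4$'' (even read as $\ge 5$) only guarantees diameter $\ge 9$, which does \emph{not} violate the Hirsch bound. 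You also omit the hypothesis, used essentially in the proof of Theorem~\ref{thm:dstep_tprism} to build the sphere $S^+$ over a base $B^+$, that the bases of the prismatoid be polytopal; this is something the finite verification step must include.

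Second, your search is set up backwards relative to the paper's, and this is not cosmetic. You propose starting from a naive small prismatoid (two disjoint tetrahedra plus middle vertices --- note the bases of a topological $4$-prismatoid are $3$-spheres, so cannot be tetrahedra) and rewarding larger base-to-base distance. But there is no guarantee such a starting point can be flipped into something of width $\ge 6$, and the walk would spend essentially all its time among examples that fail the hypothesis you need. The paper instead starts from the $28$-vertex non-$d$-step $4$-prismatoid of~\cite{MSW15:improved} (known to have width $>5$), restricts to flips that \emph{preserve} the non-$d$-step property, and minimizes the number of vertices (with a generalized-mean tie-breaker on neighborhood sizes to escape plateaus). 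This ensures every state visited is a valid witness, so the search is a vertex-minimization over a set that is known to be nonempty, rather than an uncertain hunt for a first example. That design choice is a significant part of why the computation succeeds.
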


This is close to minimal since the inequality $H_\sphere(n,d) \le n-d$ is known to hold for $n-d\le 5$. This was proved for polytopal spheres by Klee and Walkup~\cite{KlWa67:d-step}, and we show in Section~\ref{ssec:hirsch} (Theorem~\ref{thm:dstep5}) how to modify their proof for non-polytopal ones.

One reason to concentrate on small examples is that from them it is very easy to construct bigger ones. In particular, from the example mentioned in Theorem~\ref{thm:main-intro} one easily derives the following:

\begin{corollary}
\label{coro:ops}
$H_\sphere(2d,d) > d$ for every $d \ge 9$.
\end{corollary}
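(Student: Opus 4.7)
I would prove this corollary by induction on $d$, using the simplicial suspension. Given a simplicial $(d-1)$-sphere $S$ on $n$ vertices, its simplicial suspension $\Sigma S := S * \{a,b\}$ (the join of $S$ with two new isolated vertices) is a simplicial $d$-sphere with $n+2$ vertices, whose facets are precisely the sets $F \cup \{a\}$ and $F \cup \{b\}$ for $F$ a facet of $S$. This is the smallest natural construction that bumps $d$ by one while preserving the equality $n = 2d$.

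The heart of the argument is to compute the dual-graph diameter of $\Sigma S$. Two facets $F_1 \cup \{x_1\}$ and $F_2 \cup \{x_2\}$ share a ridge if and only if either $x_1 = x_2$ and $F_1,F_2$ are adjacent in $S$, or else $x_1 \neq x_2$ and $F_1 = F_2$. Thus the dual graph of $\Sigma S$ is two disjoint copies of the dual graph of $S$ glued along a perfect matching between corresponding facets, and any dual path from the $a$-side to the $b$-side must traverse this matching at least once. A direct case analysis then gives
\[
d_{\Sigma S}\bigl(F_1 \cup \{x_1\},\, F_2 \cup \{x_2\}\bigr) \;=\; d_S(F_1,F_2) + \varepsilon,
\]
where $\varepsilon = 0$ if $x_1 = x_2$ and $\varepsilon = 1$ otherwise. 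In particular, choosing any diameter-attaining pair of facets in $S$ and lifting it to opposite sides of the suspension yields $\diam(\Sigma S) = \diam(S) + 1$.

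Applying this iteratively to the $8$-sphere $S_0$ with $18$ vertices and diameter at least $10$ provided by Theorem~\ref{thm:main-intro}, the sphere $S_k := \Sigma^k S_0$ is a simplicial $(8+k)$-sphere with $18+2k$ vertices and diameter at least $10+k$. Setting $d = 9+k$ gives a sphere with $n = 2d$ vertices and diameter strictly greater than $n-d = d$, establishing $H_\sphere(2d,d) > d$ for every $d \ge 9$. I expect no real obstacle: the only content beyond Theorem~\ref{thm:main-intro} is the elementary suspension-diameter lemma, and the only point of caution is ensuring the $+1$ gain at each step, which is guaranteed by lifting a diameter pair to opposite sides of the suspension.
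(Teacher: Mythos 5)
Your proposal is correct and takes essentially the same route as the paper: the paper's one-line proof invokes the suspension (``double-pyramid'') operation to show $H_\sphere(n+2,d+1)\ge H_\sphere(n,d)+1$ and iterates from the $8$-sphere of Theorem~\ref{thm:main-intro}. You simply spell out the dual-graph-of-a-prism computation (the dual graph of $\Sigma S$ is $G(S)\,\square\, K_2$, giving $\diam(\Sigma S)=\diam(S)+1$) that the paper leaves implicit.
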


\begin{proof}
The suspension (or ``double-pyramid'') operation shows $H_\sphere(n+2,d+1) \ge H_\sphere(n,d) +1$.
\end{proof}

Adding connected sums to the suspensions used in this proof we obtain a more refined asymptotic bound (see Theorem \ref{thm:asymptotic}). Applying it to the non-Hirsch sphere of Theorem~\ref{thm:main-intro} gives:

\begin{corollary}
\label{coro:asymptotic_intro}
For every $n$ and $d$,
\[
H_\sphere(n,d) >
      \left\lfloor \frac{n-d}{d}\right\rfloor \cdot
    \left(\left\lfloor \frac{10d}{9}\right\rfloor -1\right)
      \simeq 1.11 (n-d) .
  \]
\end{corollary}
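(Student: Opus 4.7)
\emph{Approach and base sphere.} The input is the non-Hirsch $8$-sphere $S_0$ of Theorem~\ref{thm:main-intro}, with $n_0=18$ vertices and dual-graph diameter at least $10$. My plan is to combine $S_0$ with three operations on simplicial spheres---simplicial joins, connected sums, and stellar subdivisions---to produce, for every $d\ge 9$ and every $n\ge 2d$, a $(d-1)$-sphere on $n$ vertices whose diameter strictly exceeds the stated bound. (For $d<9$ the bound falls below the Hirsch bound $n-d$, which is attained by classical constructions, and for $n<2d$ the bound is $0$, so those ranges are trivial.) First, for each $d\ge 9$, I write $d=9k+r$ with $0\le r<9$ and define the base sphere
\[
T_d \;:=\; \underbrace{S_0*\cdots*S_0}_{k\text{ factors}} \;*\; \underbrace{S^0*\cdots*S^0}_{r\text{ factors}},
\]
where $*$ is the simplicial join and $S^0$ is the two-point sphere. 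A direct check shows that two facets $F\cup G$ and $F'\cup G'$ of a join $S*T$ are dual-graph adjacent exactly when one of the pairs $(F,F')$, $(G,G')$ coincides and the other is a pair of adjacent facets; so the dual graph of a join is the Cartesian product of the dual graphs of its factors. Combining this with $\lfloor 10r/9\rfloor=r$ for $0\le r\le 8$, one gets that $T_d$ is a $(d-1)$-sphere on $18k+2r=2d$ vertices with $\diam(T_d)\ge 10k+r=\lfloor 10d/9\rfloor$.

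\emph{Connected sums and padding.} Set $m:=\lfloor(n-d)/d\rfloor$. I would glue $m$ copies of $T_d$ by iterated connected sum, choosing each gluing facet to be an endpoint of a diameter-realizing path in the current complex. The standard estimate
\[
\diam(S_1\,\#\,S_2)\;\ge\;\diam(S_1)+\diam(S_2)-1,
\]
obtained by noting that any path between extremal facets on opposite sides of the glue must transit through a facet adjacent to the glued simplex in each side, iterated $m-1$ times produces a $(d-1)$-sphere on $(m+1)d$ vertices of diameter at least
\[
m\lfloor 10d/9\rfloor-(m-1)\;=\;m\bigl(\lfloor 10d/9\rfloor-1\bigr)+1.
\]
If $n>(m+1)d$, I then perform $n-(m+1)d$ stellar subdivisions at facets lying off a chosen diameter-realizing path; such a subdivision replaces a length-$2$ crossing $A\to F\to B$ with a length-$3$ detour $A\to F_i\to F_j\to B$ and introduces no new shortcuts, so it never decreases any dual-graph distance. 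The resulting $(d-1)$-sphere has exactly $n$ vertices and diameter $\ge m(\lfloor 10d/9\rfloor-1)+1$, strictly beating the corollary's right-hand side.

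\emph{Main obstacle.} The non-routine ingredient is the connected-sum diameter estimate with the $-1$ loss per gluing: one must pick the glued facets so that no path across the glue shortcuts the distances inside each copy by more than a single edge, and this requires exhibiting extremal gluing facets whose neighborhoods on either side are still ``far'' from the diameter-realizing endpoints. The Cartesian-product description of the dual graph of a join and the non-decrease of diameter under stellar subdivision are standard but deserve explicit verification, as does the fact that joins, connected sums and stellar subdivisions of simplicial spheres are again simplicial spheres.
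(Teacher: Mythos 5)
Your proof is correct and takes essentially the same route as the paper's. The paper derives the corollary by plugging $(d_0,l_0)=(9,10)$ from Theorem~\ref{thm:main-intro} into the general bound of Theorem~\ref{thm:asymptotic}, whose proof is precisely your combination of $k$-fold joins of $S_0$, suspensions (joins with $S^0$) to reach dimension $d$, iterated connected sums to pad the vertex count to $d\lfloor n/d\rfloor$, and monotonicity of $H_\sphere(\cdot,d)$ in the number of vertices—which you make explicit via stellar subdivisions off a diameter-realizing path.
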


\bigskip

Our construction leading to Theorem~\ref{thm:main-intro} uses the same \emph{prismatoid} technique developed by the second author and present in all non-Hirsch polytopes known so far~\cite{San12:counterexample,MSW15:improved}, but we abstract it to a combinatorial/topological context. Recall that a $d$-prismatoid $P$ is a $d$-dimensional polytope whose vertices lie in two parallel facets. Removing from $\partial P$ the relative interiors of these two facets produces a polyhedral complex homeomorphic to the Cartesian product of a $(d-2)$-sphere and a segment, and with all its vertices in the boundary. This complex, that we assume to be simplicial, is what we call a \emph{topological $(d-1)$-prismatoid}. See Section~\ref{sec:tprismatoids} for details. The \emph{width} of a topological prismatoid $\calC$ is defined to equal  $2$ plus the minimum distance, in the adjacency graph, between facets incident to one and the other component of $\partial \calC$. In this setting we prove the following analogue of \cite[Theorem 2.6]{San12:counterexample}:

\begin{introtheorem}[Theorem \ref{thm:dstep_tprism}]
  Let $\calC$ be a topological prismatoid of dimension $(d-1)$, of width $l$ and with $n>2d$ vertices.
  Assume that its two bases are polytopal.
  Then, there exists a topological $(n-d-1)$-prismatoid $\calC'$ with $2n-2d$ vertices and width at least $l+n-2d$.

In particular, if $l>d$ then $\calC'$ is a non-Hirsch simplicial sphere of dimension $D-1:= n-d-1$, with $N:=2D=2n-2d$ vertices, and of adjacency diameter larger than $N-D$.
\end{introtheorem}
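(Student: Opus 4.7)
The plan is to adapt Santos's strong $d$-step theorem for polytopal prismatoids~\cite{San12:counterexample} to the topological setting by proving a \emph{one-step lemma} and iterating it $n-2d$ times. The one-step lemma asserts: from any topological $(d-1)$-prismatoid $\calC$ with $n>2d$ vertices, width $l$, and polytopal bases, one can construct a topological $d$-prismatoid $\calC_1$ with $n+1$ vertices, width at least $l+1$, and polytopal bases. Iterating this lemma $n-2d$ times starting from $\calC$ yields a topological $(n-d-1)$-prismatoid $\calC'$ with $2n-2d$ vertices and width at least $l+(n-2d)$, which is the first conclusion of the theorem.

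The one-step construction is a combinatorial ``vertex splitting'' at a vertex $v$ in one of the bases, say $v\in\calC^+$: replace $v$ by two new vertices $v^+,v^-$, and modify the facet structure so that each facet $F$ of $\calC$ not containing $v$ is doubled into $F\cup\{v^+\}$ and $F\cup\{v^-\}$, while each facet $F'\cup\{v\}$ of $\calC$ containing $v$ is promoted to the facet $F'\cup\{v^+,v^-\}$. Three things need to be checked. First, $\calC_1$ has $n+1$ vertices and is a topological $d$-prismatoid: intuitively, thickening the boundary vertex $v$ into the interval $[v^-,v^+]$ converts $\calC\cong S^{d-2}\times I$ into $\calC_1\cong S^{d-1}\times I$. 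Second, its two new bases are, combinatorially, the one-point suspension of $\calC^+$ at $v$ and the ordinary suspension of $\calC^-$ with apices $v^+,v^-$; both remain polytopal, because these operations are simplicial duals of the wedge and the prism over simple polytopes, respectively, both of which preserve polytopality. Third, the width of $\calC_1$ is at least $l+1$, which is the crux of the lemma.

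To prove the width increase, project each dual-graph path of $\calC_1$ to a walk in the dual graph of $\calC$ by identifying $v^+$ and $v^-$ back to $v$. Under this projection, each adjacency step in $\calC_1$ becomes either a genuine adjacency step in $\calC$, or a ``trivial'' step (when the shared codimension-$1$ face of the two $\calC_1$-facets contains both $v^+$ and $v^-$, so both $\calC_1$-facets project to the same $\calC$-facet). A combinatorial case analysis on the three possible signatures of a $\calC_1$-facet (containing $v^+$ only, $v^-$ only, or both) forces any path from a $\calC_1^+$-incident facet to a $\calC_1^-$-incident facet to include at least one trivial step; hence the projected walk between $\calC^+$-incident and $\calC^-$-incident facets has length at most one less than the original path. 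Since the minimum such distance in $\calC$ is $l-2$, the minimum in $\calC_1$ is at least $l-1$, giving width at least $l+1$.

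For the ``in particular'' clause, at the terminus of the iteration ($n=2d$) each base of $\calC'$ is a polytopal $(D-2)$-sphere on only $D:=n-d$ vertices, hence equals $\partial\Delta^{D-1}$; capping $\calC'$ with the two missing $(D-1)$-simplices yields a simplicial $(D-1)$-sphere $S$ on the same $N:=2D$ vertices. Each cap facet is adjacent to every facet of $\calC'$ incident to the corresponding base, so $\diam(S)$ is at least the width of $\calC'$, which is at least $l+n-2d$. The hypothesis $l>d$ then gives $\diam(S) > n-d = N-D$, so $S$ is non-Hirsch. The main obstacle in this plan is the width-increase argument: since no coordinate geometry is available as in the polytopal proof of~\cite{San12:counterexample}, one must argue purely in terms of combinatorial $v^+/v^-$ signatures and show that a signature transition along any $\calC_1^+$-to-$\calC_1^-$ dual path forces a trivial-projecting step.
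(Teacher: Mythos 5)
Your high-level strategy---a one-step lemma that adds one vertex and one each to the dimension and the width, iterate it $n-2d$ times, then cap off the two $\partial\Delta^{D-1}$ bases with simplices and bound diameter by width---is the same as the paper's, and your final cap-off argument is correct. The problem is the one-step construction itself. You propose taking the one-point suspension of $\calC$ at a vertex $v$ of a base $B^+$; this does \emph{not} produce a topological prismatoid. Indeed, in the one-point suspension $\calC_1$ of $\calC$ at $v$, the link of the new vertex $v^+$ is isomorphic to $\calC$ itself (with $v$ relabelled $v^-$), which is a cylinder $\mathbb{S}^{d-2}\times[0,1]$ rather than a ball or a sphere; so $\calC_1$ fails to be a manifold with boundary at $v^+$ and $v^-$. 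The symptom is already visible in your own description of the new bases: ``the one-point suspension of $B^+$ at $v$'' and ``the suspension of $B^-$ with apices $v^+,v^-$'' both contain the vertices $v^+,v^-$, so they are not disjoint, whereas the two bases of a prismatoid are the two components of $\partial(\mathbb{S}^{d-2}\times[0,1])$ and must be disjoint induced subcomplexes. Concretely, take for $\calC$ the six-triangle twisted-prism triangulation of the annulus with $B^+=\partial\{1,2,3\}$, $B^-=\partial\{4,5,6\}$, and split $v=1$: the result has a facet $\{4,6,v^+,v^-\}$ whose four vertices all lie in your putative second base, and $\link(v^+)$ is again an annulus, not a disc.

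The paper's construction is different, and the difference is essential. One first extends $B^+$ to a polytopal $(d-1)$-sphere $S^+$ on the \emph{same} vertex set (this is the only use of polytopality of $B^+$): realize $B^+=\partial P$, lift the vertices of $P$ generically, and let $S^+$ be the boundary of the lifted hull, written $S^+=S^+_1\cup S^+_2$ with $S^+_1\cap S^+_2=B^+$ the upper and lower halves. Then set $\calC':=(\calC\cup S^+_1)*v_1\ \cup\ (\calC\cup S^+_2)*v_2$ with two new vertices $v_1,v_2$. The crucial point is that $v_1,v_2$ go \emph{entirely} into the new $B^-$ (which becomes $B^-*\{v_1,v_2\}$), while the new $B^+$ is $S^+$ on the unchanged vertices, so the bases stay disjoint. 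This gives $n+2$ vertices; a one-point-suspension merge \emph{on the $B^-$ side} (replace the stars of $vv_1$ and $vv_2$ by the star of $v_1v_2$, for some vertex $v\in B^-$) then deletes one vertex without losing width and lands at $n+1$. Your width argument by projection with penalized ``trivial steps'' is in the same spirit as the paper's (which is also purely combinatorial---the lifting is used only to build $S^+$, not to bound widths), but as written it is applied to a complex that is not a prismatoid, so it cannot be rescued without first replacing the underlying construction.
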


Since this theorem is related to the $d$-step property of Klee and Walkup~\cite{KlWa67:d-step}, we say that topological $(d-1)$-prismatoids of width larger than $d$ are \emph{non-$d$-step}. Our goal is to find non-$d$-step topological prismatoids with $n-d$ as small as possible.

We do this by a \emph{simulated annealing} approach on the graph of \emph{flips} among non-$d$-step prismatoids of a given dimension.
That is, we start with a topological $(d-1)$-prismatoid of width $l >d$ and do flips in it at random, but preserving the width and giving higher probability to the flips that go in the direction of decreasing $n$.
See Section~\ref{ssec:tprismatoids} for the definition and properties of flips in topological prismatoids, and Section~\ref{sec:implementation} for details of our heuristics and implementation.

We choose as starting point the $28$-vertex  prismatoid constructed in \cite[Corollary 2.9]{MSW15:improved}, which has dimension $d=5$ as a polytope, that is, $d-1=4$ as a topological prismatoid.
From it we find thousands of non-$d$-step topological $4$-prismatoids with number of vertices below $28$. In particular, we find four combinatorially different ones with $14$ vertices.
Any of these four implies Theorem~\ref{thm:main-intro}, via Theorem~\ref{thm:dstep_tprism}.
Observe that although the prismatoids are found computationally, the proof that they are non-$d$-step is elementary and can be done by hand. 

The constructed prismatoids are analyzed a bit in Section~\ref{sec:results}.
For the four smallest ones we have checked that they are \emph{shellable}, with respect to a natural notion of shelling of topological prismatoids that we introduce in Section~\ref{ssec:shelling}, and which implies shellability for the resulting non-Hirsch spheres 
mentioned in Theorem~\ref{thm:main-intro}
(see Proposition~\ref{prop:shellable}).
Shellability is necessary for polytopality, but we do not know whether our prismatoids (or spheres) are polytopal.

\subsection*{Acknowledgements}
We thank Mortitz Firsching and Michael Joswig for useful discussions and comments.


\section{Preliminaries}
\label{sec:prelim}

\subsection{Pure simplicial complexes, simplicial spheres}
\label{ssec:complexes}

Here we compile several notions from combinatorial topology.

A \emph{simplicial complex} $\calC$ is a collection of subsets of a finite ground set $V$ (typically, $V=[n]:=\{1,\dots,n\}$), that is closed under taking subsets. It is \emph{pure of dimension $d-1$} (in which case we call it a \emph{$(d-1)$-complex}) if all maximal elements of $\calC$ have the same cardinality, equal to $d$.
  The elements of $\calC$ are called \emph{faces} and maximal faces are \emph{facets}; more specifically, a face of size $i$ is called an \emph{$(i-1)$-face}.
  Some faces have special names according to their cardinality:
  \begin{itemize}
  \item Faces of size $1$ and $2$ are called \emph{vertices} and \emph{edges}; together they form a graph, the \emph{$1$-skeleton} of $\calC$.
  \item Faces of size $d$ and $d-1$ are called \emph{facets} and \emph{ridges}; they also define a graph, called the \emph{adjacency graph} (or \emph{dual graph}) of $\calC$: its vertices are the facets and two facets are \emph{adjacent} if they share a ridge.
 \item Every complex has a face of size $0$, the \emph{empty face}.
  \end{itemize}

We call \emph{Hasse diagram of $\calC$} the Hasse diagram of the partial order of faces by inclusion. That is, it is a directed graph with an arc $f_1 \to f_2$ for every pair of faces $f_1$ and $f_2$ with $f_2=f_1 \cup \{v\}$ for some $v\in V$. Observe that the $1$-skeleton and the adjacency graph of a pure complex contain the information about the lower two and the higher two levels of the Hasse diagram, respectively.

\begin{example}
The boundary complex of a simplicial $d$-polytope is a pure simplicial complex of dimension $d-1$.
\end{example}

A \emph{subcomplex} of $\calC$ is a subset of faces that is itself a complex. The subcomplex \emph{induced} by a subset $W$ of vertices is the set of faces of $\calC$ contained in $W$.
Associated to every face $f\in\calC$ there are the following three (perhaps non-induced) subcomplexes of $\calC$.
 \begin{itemize}
    \item The \emph{deletion} of $f$ in $\calC$ is the set of faces disjoint from $f$. That is, it equals the subcomplex induced by $V\setminus f$.
    \item The \emph{star} of $f$ in $\calC$ is the set of faces $f'$ such that $f\cup f'$ is also a face. Equivalently, it is the simplicial complex whose facets are the facets of $\calC$ containing $f$. Observe that with our definition the star is a \emph{closed} neighborhood of $f$.
    \item The \emph{link} of $f$ in $\calC$ is the set of faces in the star that do not contain $f$. That is, it equals the deletion of $f$ in the star of $f$.
    \end{itemize}
We call \emph{neighborhood} of $f$ the set of vertices of its star.

Every simplicial complex $\calC$ has a realization as a topological space obtained as follows: consider a disjoint family of simplices in $\R^N$ (for sufficiently big $N$) consisting of a simplex of dimension $i$ for each $i$-face in $\calC$. Then take the topological quotient of this set of simplices, by identifying faces as indicated by containment in $\calC$. A \emph{simplicial $(d-1)$-manifold} or \emph{triangulated manifold} (with or without boundary) is a pure $(d-1)$-complex whose realization is a manifold. A \emph{simplicial $(d-1)$-sphere} or \emph{$(d-1)$-ball} is defined in the same way.  A simplicial $(d-1)$-sphere is \emph{polytopal} if it can be realized as the boundary complex of a $d$-polytope.

Every ridge of a simplicial manifold is contained in either two or one facets.
Ridges of the first type are called \emph{interior} and those of the second type are called \emph{boundary}. The boundary of a $(d-1)$-manifold $\calC$ is the $(d-2)$-pure complex having as facets the boundary ridges. That is, all faces contained in boundary ridges are \emph{boundary faces}. The \emph{adjacency graph} of a simplicial manifold $\calC$ is the graph having as nodes the facets of $\calC$ and as edges the pairs of facets that share an interior ridge. 

\subsection{The Hirsch conjecture and its relatives}
\label{ssec:hirsch}

The \emph{(combinatorial) diameter} of a polytope is the diameter (in the sense of graph theory) of its $1$-skeleton.
Let $H_\poly(n,d)$ denote the maximum diameter among all $d$-polytopes with $n$ facets.
The Hirsch conjecture  stated that $H_\poly(n,d)\le n-d$. The first counter-examples have been obtained in~\cite{San12:counterexample, MSW15:improved} and exceed the conjecture by a 5$\%$ or less. (As a historical note: the original conjecture by Hirsch referred to perhaps-unbounded polyhedra, but the unbounded case was disproved in 1967 by Klee and Walkup~\cite{KlWa67:d-step}. Since then, the expression Hirsch conjecture has been used for the bounded case) 

It has been known for long that $H_\poly(n,d)$ is attained at a simple polytope for every $n$ and $d$. In particular, $H_\poly(n,d)$ equals the maximum diameter of the adjacency graphs of simplicial $d$-polytopes with $n$ vertices. Generalizing this a little bit we define $H_\sphere(n,d)$ to be the maximum diameter of the adjacency graphs of simplicial $(d-1)$-spheres with $n$ vertices, and $H_\simp(n,d)$ to be the maximum diameter of the adjacency graphs of pure simplicial $(d-1)$-complexes with $n$ vertices.

The latter is known to be exponential, but the first two are conjectured to be polynomial and perhaps not far from the Hirsch bound.
More precisely, Table~\ref{tab:bounds} sums up the known bounds.
The lower bounds in the last two rows of the table are meant asymptotically: they hold for fixed but sufficiently large $d$ and as $n$ goes to infinity. They follow from the known smallest known non-Hirsch polytope ($n=40$, $d=20$~\cite{MSW15:improved}) and sphere ($n=24$, $d=12$~\cite{MaWa80:counterexample}) via Theorem~\ref{thm:asymptotic} below, whose proof follows from the following considerations.

\begin{table}[ht]
\begin{center}
\begin{tabular}{c|c|c}
    & Lower bound & Upper bound \\ \hline\hline

    $H_\simp(n,d)$ &

    \begin{tabular}{c}
      $\Omega\left(\tfrac{n^{d-1}}{d^2d!}\right)$ \\ 
      (Criado-Newman 2019~\cite{CrNe19:randomized})
    \end{tabular} &
    \begin{tabular}{c}
    $O\left(\tfrac{n^{d-1}}{d!}\right)$ \\
    (Santos 2013~\cite{San13:diameter})
    \end{tabular}   \\  \hline

    $H_{sph}(n,d)$ &
    \begin{tabular}{c}
      $\simeq 1.08(n-d)$ \\
      (Mani-Walkup 1980~\cite{MaWa80:counterexample}) \\
      \hline
      $\simeq 1.11(n-d)$ (This paper)
    \end{tabular} &
    \multirow{2}{*}{
      \begin{tabular}{c}
      $\min\{2^{d-3}n, n^{\log{d-2}} \} $  \\
        (Larman 1970~\cite{Larman70:paths},  \\
        Kalai-Kleitman 1992~\cite{KaKl92:quasip})
      \end{tabular}
    }
  \\ \cline{1-2}

    $H_{poly}(n,d)$ & \begin{tabular}{c}
      $\simeq 1.05(n-d)$ (Matschke-\\
      Weibel-Santos 2017~\cite{MSW15:improved})
    \end{tabular} & \\
\end{tabular}
\end{center}
\caption{Known bounds for maximum diameters of classes of simplicial complexes}
  \label{tab:bounds}
\end{table}

Additionally, the Hirsch bound is known for $2$-spheres (all of which are polytopal) and for spheres with $n-d\le 5$. The latter is stated in~\cite{KlWa67:d-step}  only for polytopes, but the proof works for spheres with minor changes as we sketch below.

In the proof we need the \emph{one-point suspension} construction: the one-point suspension of a simplicial complex $\calC$ at a vertex $w$ is the complex $\calC'$ obtained
by considering the usual suspension $\calC *\{\{w_1\}, \{w_2\}\}$ of $\calC$ and then merging the stars of edges $ww_1$ and $ww_2$ into the star of a single edge $w_1w_2$ so that the vertex $w$ disappears.
For a more direct and explicit description:
\begin{align*}
\calC' :=\ &
\{F, F\cup\{w_1\}, F\cup\{w_2\} , F\cup\{w_1,w_2\} : F \in \link_{\calC}(w)\}
\\
\cup\ &
\{F, F\cup\{w_1\}, F\cup\{w_2\} : w\not\in F \in \calC\}.
\end{align*}
The one-point suspension of a sphere is a sphere with one more dimension, one more vertex and (at least) the same diameter as the original one.  One-point suspensions of polytopal spheres are polytopal. (See details for example in~\cite{KiSa09:companion, San12:counterexample}).

\begin{proposition}[Sphere version of~\protect{\cite[Proposition 2.10]{KlWa67:d-step}}]
\label{prop:dstep}
\[
H_\sphere(n,d) \le H_\sphere(2n-2d,n-d) \qquad \forall n,d.
\]
\end{proposition}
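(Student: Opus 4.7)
The plan is to induct on $n - 2d$, using the one-point suspension as the workhorse. The base case $n = 2d$ is trivial since $H_\sphere(n,d) = H_\sphere(2n-2d, n-d)$ when $n-d=d$. For the inductive step, the key claim to establish is:
\[
H_\sphere(n,d) \le H_\sphere(n+1, d+1) \qquad \text{whenever } n > 2d.
\]
Given this, iterating $n-2d$ times yields the result, since $n-2d$ decreases by one at each step and $(n,d) \mapsto (n+1,d+1)$ preserves $n - d$; after $n-2d$ iterations we arrive at $(2n-2d, n-d)$.

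To prove the key claim, I would take a simplicial $(d-1)$-sphere $\calC$ with $n$ vertices attaining (or approximating) $H_\sphere(n,d)$, pick any vertex $w \in \calC$, and form the one-point suspension $\calC'$ of $\calC$ at $w$ as recalled in the excerpt. By the properties of the construction cited there (see \cite{KiSa09:companion, San12:counterexample}), $\calC'$ is a simplicial $d$-sphere with $n+1$ vertices, so $\calC'$ contributes to $H_\sphere(n+1, d+1)$. It remains to verify that the adjacency diameter does not drop, i.e.~$\diam(\calC') \ge \diam(\calC)$.

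For this, I would exhibit a natural graph homomorphism $\pi$ from the adjacency graph of $\calC'$ to that of $\calC$. Using the explicit description of $\calC'$ given in the excerpt, each facet $G$ of $\calC$ with $w \in G$ lifts to the unique facet $(G \setminus \{w\}) \cup \{w_1,w_2\}$ of $\calC'$, while each facet $F$ of $\calC$ with $w \notin F$ lifts to the two facets $F \cup \{w_1\}$ and $F \cup \{w_2\}$; define $\pi$ by undoing these identifications (sending $w_1, w_2 \mapsto w$). A case analysis on the types of facets involved shows that any two adjacent facets of $\calC'$ are sent by $\pi$ either to the same facet of $\calC$ or to two adjacent facets. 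Consequently, any path in the adjacency graph of $\calC'$ projects to a walk of no greater length in the adjacency graph of $\calC$, so distances can only increase upward, and $\diam(\calC') \ge \diam(\calC)$ as required.

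The main obstacle is this last homomorphism check, since one must verify that the ``doubled'' pair of facets $F \cup \{w_1\}, F \cup \{w_2\}$ do not create shortcuts that would make the distance in $\calC'$ between two lifts strictly smaller than the distance in $\calC$ between their projections. Once the case analysis confirms that adjacent pairs project to equal-or-adjacent pairs, the induction closes and the proposition follows exactly as in the polytopal argument of Klee--Walkup, with one-point suspension playing the role of the wedge.
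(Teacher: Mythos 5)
Your argument is correct for the case $n \ge 2d$, and there it matches the paper's approach exactly: both induct on $n-2d$ via the one-point suspension, with the point being that $\diam(\calC') \ge \diam(\calC)$ (the paper outsources the homomorphism check to \cite{KiSa09:companion,San12:counterexample}, while you sketch it directly; your sketch is sound).

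However, there is a genuine gap: the statement asserts the inequality for \emph{all} $n,d$, and the regime $n < 2d$ (e.g.\ the boundary of a cross-polytope, $n=2d$, or any sphere with $d+1 \le n < 2d$) is not covered by your induction, which only runs downward from $n-2d = k>0$ to $k-1$. In this range $2n-2d < n$ and $n-d < d$, so the claim is a reduction to \emph{fewer} vertices and \emph{lower} dimension, and one-point suspension goes the wrong way. The paper handles this by a separate induction on $2d-n$: since $|F|+|G| = 2d > n$, any two facets $F,G$ share a vertex $v$; passing to $\link_\calS(v)$, a $(d-2)$-sphere with at most $n-1$ vertices, one has $d_\calS(F,G) \le d_{\link_\calS(v)}(F\setminus v, G\setminus v)$ and the inductive hypothesis gives the bound. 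You need to add this second half to have a complete proof.
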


\begin{proof}
Let $\calS$ be a $(d-1)$-sphere with $n$ vertices and let $F$ and $G$ be two facets in it.
If $n=2d$ then there is nothing to prove.

If $n<2d$ we use induction on $2d-n$. Observe that $F$ and $G$ cannot be disjoint, so let $v\in F\cap G$. Denote $\calS'=\link_{\calS}(v)$, and consider $F'=F\setminus \{v\}$ and $G'=G\setminus \{v\}$, which are facets in $\calS'$. The distance from $F$ to $G$ in $\calS$ is clearly bounded by the distance from $F'$ to $G'$ in $\calS'$, and $\calS'$ is a $(d-2)$-sphere with at most $n-1$ vertices. By inductive hypothesis the diameter of $\calS'$ is bounded by $H_\sphere(2(n-1)-2(d-1),(n-1)-(d-1))= H_\sphere(2n-2d,n-d)$.

If $n>2d$ we use induction on $n-2d$. Denote $\calS'$ the one-point suspension of $\calS$. Then, 
\[
\diam(\calS) \le \diam(\calS') \le H_\sphere(n+1,d+1)\le H_\sphere(2n-2d,n-d),
\]
by inductive hypothesis.
\end{proof}

\begin{theorem}[Sphere version of~\protect{\cite[Theorem 4.2]{KlWa67:d-step}}]
\label{thm:dstep5}
For any $n,d$ with $n-d\le 5$,
$H_\sphere(n,d) \le n-d.$
\end{theorem}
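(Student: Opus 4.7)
The plan is to reduce to the case $n = 2d$ using Proposition~\ref{prop:dstep}, which gives $H_\sphere(n, d) \le H_\sphere(2(n-d), n-d)$. Setting $d' := n - d \le 5$, it suffices to prove $H_\sphere(2d', d') \le d'$ for every $d' \le 5$; for notational simplicity I drop the prime and phrase this as the \emph{$d$-step property} for simplicial spheres: any two disjoint facets $F, G$ of a simplicial $(d-1)$-sphere $\calS$ with $2d$ vertices are at dual-graph distance at most $d$, for every $d \le 5$.

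For $d \le 2$ the claim is immediate. For $d = 3$ every simplicial $2$-sphere is polytopal by Steinitz's theorem, hence $H_\sphere(6,3) = H_\poly(6,3) \le 3$ by the original polytopal Klee--Walkup theorem. The real content lies therefore in the cases $d = 4$ and $d = 5$.

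For these I would translate Klee--Walkup's polytopal argument into the simplicial setting. Their proof constructs a \emph{non-revisiting} dual path $F = F_0, F_1, \ldots, F_k = G$, meaning that for every vertex $v$ the index set $\{i : v \in F_i\}$ is an interval of $\{0, 1, \ldots, k\}$. When $F$ and $G$ are complementary facets in a $(d-1)$-sphere with $2d$ vertices, each step of a non-revisiting path drops one vertex of $F$ and adds a new vertex of $G$, so the path has length exactly $d$, giving the desired bound. Klee--Walkup's toolkit (induction on the dimension via $\link$, local modifications of paths through ridges, and low-dimensional analysis using Euler's formula) is formulated entirely in terms of facets, ridges, stars, and links, and should extend verbatim to simplicial spheres.

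The main obstacle is that Klee--Walkup's $d = 5$ argument is an intricate case analysis, so one must verify that no step implicitly uses polytopality (polarity, Gale diagrams, convex position, and the like). Wherever such a use appears, I would replace it by a PL-combinatorial substitute obtained from the sphere structure; as a fallback, simplicial $3$-spheres with $8$ vertices and simplicial $4$-spheres with $10$ vertices form finite, enumerable classes, and the diameter bound could in principle be checked on each by direct computation.
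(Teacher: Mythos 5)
Your reduction to the case $n=2d$ via Proposition~\ref{prop:dstep} matches the paper, as does the observation that $d\le 3$ is handled by Steinitz plus the polytopal Klee--Walkup result. The problem is the core of your proposal for $d=4,5$, where you say that Klee--Walkup's argument ``should extend verbatim to simplicial spheres'' and then acknowledge that ``one must verify that no step implicitly uses polytopality.'' That verification is exactly the thing the paper is structured to avoid, and you give no mechanism for carrying it out beyond a fallback enumeration of $3$-spheres on $8$ vertices and $4$-spheres on $10$ vertices, which you also do not perform (and which is far from a small computation). As written, this is a genuine gap: you have a plan but not a proof.

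The missing idea is a decomposition of Klee--Walkup's argument into a part that is cheap to extend and a part that never needs extending. The paper splits the polytopal proof into three claims. The first (reduce to $F\cap G=\emptyset$) and second (find an edge $\{v,w\}$ with $v\in F$, $w\in G$ joining facets adjacent to $F$ and $G$) are independent of $d\le 5$ and extend to spheres by elementary manipulations with links and one-point suspensions. The third claim --- the intricate case analysis you are worried about --- concerns only facets in $\st_\calS(\{v,w\})$, i.e.\ it is a statement entirely about $\link_\calS(\{v,w\})$. That link is a $(d-3)$-sphere, and for $d\le 5$ this means a sphere of dimension at most $2$, hence \emph{automatically polytopal}. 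So one can invoke Klee--Walkup's polytopal lemma on the link directly, with no need to audit their case analysis for hidden uses of convexity or Gale diagrams. That reduction to a low-dimensional, hence polytopal, link is the point your proposal is missing; without it you either have to re-prove the hardest part of \cite{KlWa67:d-step} in the PL category or carry out the enumeration you sketch as a fallback.
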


\begin{proof}
By Proposition~\ref{prop:dstep} we can assume $n=2d$.
So, let $\calS$ be a $(d-1)$-sphere with $2d$ vertices and let $F$ and $G$ be two facets in it achieving the diameter. We want to show that he distance from $F$ to $G$ to be at most $d$.

The proof follows the one in~\cite{KlWa67:d-step} for the polytopal case and consists of the following three claims. Claims 1 and 2 do not need the assumption that $d\le 5$, and correspond to Theorem 2.8 and Proposition 3.4.(b) in~\cite{KlWa67:d-step}.

\emph{Claim 1: there is no loss of generality in assuming $F\cap G=\emptyset$}.
If this is not the case, let $\calS'=\link_\calS(F\cap G)$. Then, $F':=F\setminus G$ and $G':=G\setminus F$ are facets in $\calS'$ at distance at least equal to the distance between $F$ and $G$ in $\calS$. Observe that $\calS'$ is a $(d-1-k)$-sphere, where $k=|F\cap G|$,  and it has between $2d-2k$ and $2d-1$ vertices.
  If  $\calS'$ has $2d-2k$ vertices, the diameter of $\calS'$ is bounded by $H_\sphere(2d-2k,d-k)$ which, by induction on $d$, is at most $d-k$.
  If $\calS'$ has more than $2d-2k$ vertices, let $\calS''$ be the iterated one-point suspension of $\calS'$ at each vertex $w$ not in $F'\cup G'$.
$\calS''$ is a sphere of dimension $d-k-1+l$ and with $2d-2k+2l$ vertices, where $l$ is the number of times we did the one-point suspension. In $\calS'$, $F'$ and $G'$, each together with one of the two vertices of each suspension, give two complementary facets which are at least at the same distance as $F'$ and $G'$ were in $\calS'$. 

\emph{Claim 2: assume $F\cap G=\emptyset$.
There are vertices $v\in F$ and $w\in G$ such that $\{v,w\}$ is an edge in $\calS$ and such that $F$ and $G$ are adjacent respectively to facets $F'$ and $G'$ with $\{u,v\}\in F'\cap G'$.}
Let $F'$ be any facet adjacent to $F$. Let $w$ be the unique vertex in $F'\setminus F$ and $u$  the unique vertex in $F\setminus F'$.
It is impossible for the $d-1$ facets adjacent to $G$ and containing $w$ to all of them use $u$: indeed, if that happened then these $d-1$ facets together with $G$ form a ball with $w$ in its interior, implying that there are no other facets in the star of $w$; this is impossible because $F'$ is in the star of $w$ too.
Hence, there is a facet $G'$ adjacent to $G$, using $w$, and using a vertex $v$ of $F$ other than $u$. This proves the claim for the edge $vw$ thus obtained.

\emph{Claim 3: with $v$, $w$ as above, there is a path in $\st_\calS(\{v,w\})$ of length at most $d-2$ between a facet $F'$ adjacent to $F$ and a facet $G'$ adajcent to $G$.} The proof of this claim is the complicated part, occupying most of Section 4 (pages 69--71) of~\cite{KlWa67:d-step}. The good thing is that we do not to even check that the proof extends to spheres, since it is a statement about the link of the edge $\{u,v\}$ in $\calS$ and that link is a $(d-3)$-sphere, hence polytopal for $d\le 5$. Indeed, what Klee and Walkup show is that the following (which paraphrases \cite[p. 69, ll.~4--8]{KlWa67:d-step}) holds for every simplicial $k$-polytope $Q$ with $k\le 3$ and with between $2k$ and $2k+2$ vertices: ``if the vertices of $Q$ are divided into two disjoint classes $X$  and $Y$, each consisting of at most $k+1$ vertices, then there is a path of length at most $k$ from a facet contained in $X$ and a facet contained in $Y$.''
This proves the claim by letting $Q$ be a polytope isomorphic to $\link_\calS(\{v,w\})$, $k=d-2$, and  $X$ and $Y$ be $F\setminus \{v\}$ and $G\setminus \{w\}$, respectively.
\end{proof}

We call a $d$-polytope or $(d-1)$-sphere with $n$ vertices \emph{non-Hirsch} if it has diameter $l$ greater than $n-d$. Its \emph{excess} is defined to be $\frac{l}{n-d} - 1$.
From any non-Hirsch sphere or ball infinitely many additional ones can be obtained by the following procedures:

\begin{itemize}
\item The join of a $(d_1-1)$-sphere $\calS_1$ with $n_1$ vertices and diameter $l_1$ and a $(d_2-1)$-sphere $\calS_2$ with $n_2$ vertices and diameter $l_2$ is a $(d_1+d_2-1)$-sphere with $n_1+n_2$ vertices and diameter $l_1+l_2$. It is denoted $\calS_1*\calS_2$.
\item The connected sum of two $(d-1)$-spheres $\calS_1$ and $\calS_2$ with $n_1$ and $n_2$ vertices and of diameters $l_1$ and $l_2$ is a
$(d-1)$-sphere with $n_1 + n_2 - d$ vertices and of diameter at least  $l_1 + l_2 -1$. It is denoted $\calS_1 \# \calS_2$. (Strictly speaking, the diameter of a connected sum of two spheres depends on the choice of facets to glue; we here assume that they are glued in the worst possible way).

\item The suspension of $\calS$ has one more dimension and two more vertices than $\calS$, and its diameter is one more than that of $\calS$.
\end{itemize}

These constructions, which all preserve polytopality, lead to the following:

\begin{theorem}[Variation of \protect{\cite[Theorem 6.5]{San12:counterexample}}]
\label{thm:asymptotic}
If for a certain $d_0$ we know that $H_\sphere(2d_0,d_0) = l_0$, then
\[
H_\sphere(n,d) >
      \left\lfloor \frac{n-d}{d}\right\rfloor \cdot
    \left(\left\lfloor \frac{d}{d_0}\right\rfloor (l_0-d_0) + d-1\right) \simeq \frac{l_0}{d_0}(n-d),
      \qquad
      \forall n , d.
\]
In particular, we have
$ H_\sphere(n,d) \gtrsim (n-d)\frac{l_0}{d_0}$
 for  $n \gg d \gg d_0$.

The same holds for $H_\poly$.
\end{theorem}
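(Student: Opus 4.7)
The plan is to chain the three operations (join, suspension, connected sum) listed just before the theorem, starting from a diameter-maximizing $(d_0-1)$-sphere $\calS_0$ with $2d_0$ vertices and diameter $l_0$.

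First, a facet of $\calS_1*\calS_2$ is precisely $F_1\cup F_2$ with each $F_i$ a facet of $\calS_i$, and two such facets are adjacent iff they agree on one side and are adjacent on the other. Thus the adjacency graph of the join is the Cartesian product of the two adjacency graphs, so its diameter is the sum of the two diameters. Joining $k:=\lfloor d/d_0\rfloor$ copies of $\calS_0$ therefore produces a $(kd_0-1)$-sphere with $2kd_0$ vertices and diameter $kl_0$, and applying $j:=d-kd_0\ge 0$ suspensions lifts this to a $(d-1)$-sphere $\calS^*$ with $2d$ vertices and diameter $k(l_0-d_0)+d$.

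Next, with $q:=\lfloor (n-d)/d\rfloor$, I would form the iterated connected sum of $q$ copies of $\calS^*$, each gluing chosen so that the $l_1+l_2-1$ lower bound is attained. The result has $q(2d)-(q-1)d=qd+d$ vertices and diameter at least
\[
q\bigl(k(l_0-d_0)+d\bigr)-(q-1)=q\bigl(k(l_0-d_0)+d-1\bigr)+1.
\]
If $qd+d<n$, I would pad with $n-qd-d$ further connected sums against the boundary of a $d$-simplex; each such step adds one vertex, keeps the dimension, and does not decrease the diameter, since $l_2=1$ makes $l_1+l_2-1=l_1$. The resulting sphere has exactly $n$ vertices and diameter strictly greater than $\lfloor (n-d)/d\rfloor\bigl(\lfloor d/d_0\rfloor(l_0-d_0)+d-1\bigr)$, which is the desired inequality. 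The asymptotic $(l_0/d_0)(n-d)$ is immediate since $q\sim (n-d)/d$ and the inner factor is $\sim (d/d_0)l_0$.

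The polytopal statement follows from the same construction: joins, suspensions, connected sums, and boundaries of simplices all admit polytopal realizations, so if $\calS_0$ is the boundary of a polytope then every intermediate sphere is polytopal. The only subtle part of the argument is the bookkeeping around the two floor functions together with the simplex-boundary padding needed to land on exactly $n$ vertices; everything else is direct arithmetic on the vertex, dimension, and diameter counts of the three operations listed in the excerpt.
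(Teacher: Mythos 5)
Your proof is correct and follows essentially the same route as the paper: take the $k$-fold join of $\calS_0$, apply $d-kd_0$ suspensions to reach dimension $d-1$ with $2d$ vertices, then take the iterated connected sum of $q=\lfloor(n-d)/d\rfloor$ copies. The only cosmetic difference is that you explicitly pad with boundaries of $d$-simplices to land on exactly $n$ vertices, whereas the paper invokes the implicit monotonicity $H_\sphere(d\lfloor n/d\rfloor,d)\le H_\sphere(n,d)$; these are the same observation.
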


\begin{proof}
Let $\calS_0$ be the initial sphere, of dimension $d_0-1$, with $2d_0$ vertices, and with diameter $l_0$. Then, for every $k$ the $k$-fold suspension $\calS^{*k}$ of $\calS$ has dimension $kd_0-1$, diameter $kl_0$, and $2kd_0$ vertices. Letting $k=\lfloor d/d_0\rfloor$ and performing
$d- d_0 k$ suspensions on $\calS^{*k}$ we obtain that
\[
H_\sphere(2d,d) \ge \left\lfloor \frac{d}{d_0}\right\rfloor (l_0-d_0) + d,
\qquad \forall d,k.
\]

Let $T^d$ be the $(d-1)$-sphere on $2d$ vertices obtained so far. By a connected sum of $\lfloor n/d \rfloor-1 = \lfloor (n-d)/d \rfloor$ copies of $T^d$ we conclude that
\begin{align*}
H_\sphere(n,d) \ge H_\sphere(d\lfloor n/d\rfloor, d)
    &\ge
    \left\lfloor \frac{n-d}{d}\right\rfloor \cdot
    \left(\left\lfloor \frac{d}{d_0}\right\rfloor (l_0-d_0) + d\right)
    - \left\lfloor \frac{n-d}{d}\right\rfloor +1 \\
    &=
    \left\lfloor \frac{n-d}{d}\right\rfloor \cdot
    \left(\left\lfloor \frac{d}{d_0}\right\rfloor (l_0-d_0) + d-1\right)
  +1.
\end{align*}
\end{proof}

The smallest non-Hirsch spheres constructed in this paper have dimension $8$, $18$ vertices, and excess $1/9$, which gives the bound in Corollary~\ref{coro:asymptotic_intro}.

\subsection{Prismatoids and the strong $d$-step theorem}
\label{ssec:prismatoids}

The $d$-step theorem of Klee and Walkup is the statement that $H_{\poly}(n,d)\le H_{\poly}(2(n-d),n-d)$ for every $n$ and $d$. In particular, it reduces the study of the Hirsch conjecture or the asymptotic behavior of $H_{\poly}(n,d)$ to the case $n=2d$. The proof works with no change for $H_{\sphere}(n,d)$, since it is purely combinatorial.

Santos' construction of non-Hirsch polytopes is based on a version of this result for a particular class of polytopes, the so-called prismatoids.

\begin{definition}
\label{defi:prismatoid}
  A \emph{prismatoid} is a polytope $Q$ with two parallel facets $Q^+$ and $Q^-$, that we call the \emph{bases}, containing all the vertices. We call a prismatoid \emph{simplicial} if all faces except perhaps $Q^+$ and $Q^-$ are simplices. Observe that the faces of a prismatoid of dimension $d$, excluding the two bases, form a simplicial complex of dimension $d-1$ and homeomorphic to the product of $\mathbb{S}^{d-2}$ with a segment. We call this complex the \emph{prismatoid complex} of $Q$.

The \emph{width} of a prismatoid is the distance from one base to the other, measured in the adjacency graph.
\end{definition}

\begin{theorem}[Strong d-step theorem for prismatoids~\cite{San12:counterexample}]
  If $Q$ is a simplicial $d$-prismatoid of width $l$ and with $n>2d$ vertices, there exists a simplicial $n-d$-prismatoid $Q'$ with $2n-2d$ vertices and width at least $l+n-2d$.

  In particular, if $l>d$ then (the simple polytope dual to) $Q'$ violates the Hirsch conjecture.
\end{theorem}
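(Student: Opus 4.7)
The plan is to induct on the excess $e := n - 2d$. The base case $e = 0$ is trivial: take $Q' = Q$. For the inductive step with $e \ge 1$, I will construct a simplicial $(d+1)$-prismatoid $\tilde Q$ with $n+1$ vertices and width at least $l+1$, then apply the induction hypothesis to $\tilde Q$ (which has excess $e-1$). This yields a simplicial $(n-d)$-prismatoid with $2(n+1)-2(d+1) = 2n-2d$ vertices and width at least $(l+1) + (e-1) = l + n - 2d$, as required. The ``in particular'' clause then follows by arithmetic: $l + n - 2d > n - d$ iff $l > d$, and then the polar dual of $Q'$ is a simple $(n-d)$-polytope with $2(n-d)$ facets whose graph diameter exceeds the Hirsch bound.

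For the construction of $\tilde Q$, I would pick a vertex to wedge at as follows. Since each base is a $(d-1)$-polytope with at least $d$ vertices and $n > 2d$, one of them — say $Q^+$ — has strictly more than $d$ vertices. Consequently some vertex $v \in Q^+$ is missed by at least one facet $F'$ of $Q^+$: otherwise every facet of $Q^+$ would contain every vertex of $Q^+$, contradicting $\dim Q^+ = d-1$. Let $\tilde Q$ be the polytope obtained from $Q$ by the wedge (one-point suspension) at $v$: in the simplicial prismatoid complex of $Q$, the vertex $v$ is replaced by two new vertices $v_1,v_2$; each facet $F \ni v$ becomes the $d$-face $(F\setminus\{v\}) \cup \{v_1,v_2\}$; and each facet $F\not\ni v$ is duplicated into $F\cup\{v_1\}$ and $F\cup\{v_2\}$. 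A direct inspection confirms that $\tilde Q$ is a simplicial $(d+1)$-prismatoid with $n+1$ vertices.

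The crux of the proof is showing $\mathrm{width}(\tilde Q) \ge l+1$. I would exploit the natural projection $\pi$ merging $v_1,v_2 \mapsto v$: it sends facets of $\tilde Q$ to facets of $Q$, and turns every dual-graph adjacency in $\tilde Q$ into either a dual-graph adjacency in $Q$ or into an equality. Hence any dual path of length $l^*$ in $\tilde Q$ from $\tilde Q^+$ to $\tilde Q^-$ projects to a walk of length at most $l^*$ from $Q^+$ to $Q^-$ in $Q$. To gain the one-step improvement, I would use the fact that $v \notin F'$: the new top base $\tilde Q^+$ contains two copies $F'\cup\{v_1\}$ and $F'\cup\{v_2\}$ of the same facet $F' \subset Q^+$, and I would show that every dual path from the $\tilde Q^+$-side to the $\tilde Q^-$-side must at some point switch between the $v_1$-branch and the $v_2$-branch — i.e., must traverse an adjacency that $\pi$ collapses to an equality. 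The projected walk then has effective length at most $l^*-1$, forcing $l^* \ge l+1$. The main obstacle is making this forced-switching claim precise: this requires a careful case analysis on the three types of facets of $\tilde Q$ together with the topology of the one-point suspension, and critically uses the hypothesis that the chosen $v$ is missed by some facet of its base — without this condition the wedge operation would only preserve, not strictly increase, the width.
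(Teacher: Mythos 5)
There is a genuine gap. Your key construction --- the one-point suspension of the prismatoid complex at a vertex $v$ of one base --- does not produce a prismatoid, and in fact does not even produce a simplicial manifold. After the one-point suspension, the boundary component coming from the base $B^-$ opposite to $v$ becomes $\partial B^- * \{v_1,v_2\}$ (since every face of $B^-$ avoids $v$ and hence gets doubled), while the boundary component coming from $B^+$ becomes the one-point suspension of $\partial B^+$ at $v$; both of these contain the vertices $v_1$ and $v_2$, so the boundary is connected rather than a disjoint union of two spheres. Dually, in the polytopal picture $\conv(Q\times\{0\}\cup\{(v,\pm1)\})$: the old base $Q^-$ stays in a $(d-1)$-dimensional flat and is no longer a facet of the new $(d+1)$-polytope, so there is no facet parallel to the new top base containing the remaining vertices. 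Moreover, in the simplicial complex the link of the new vertex $v_1$ is $\del_\calC(v)\cup(\link_\calC(v)*v_2)$, which is homeomorphic to the original cylinder $\mathbb{S}^{d-2}\times[0,1]$ --- not a ball or a sphere --- so the one-point suspension of a cylinder at a boundary vertex fails the vertex-link condition for being a manifold. The ``direct inspection'' you invoke would reveal this.

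The paper's construction (Theorem~\ref{thm:dstep_tprism}) is genuinely a two-step process designed precisely to avoid this problem. First, it embeds the polytopal base $B^+$ as an equator of a polytopal $(d-1)$-sphere $S^+$ with no extra vertices (using a generic lifting of a geometric realization of $B^+$), writes $S^+ = S^+_1\cup S^+_2$ as two hemispheres meeting along $B^+$, and forms $\calC' := (\calC\cup S^+_1)*v_1 \cup (\calC\cup S^+_2)*v_2$: a $(d+1)$-dimensional prismatoid with bases $S^+$ and $B^- * \{v_1,v_2\}$, with $n+2$ vertices and increased width (the cone points $v_1,v_2$ sit on the $B^-$ side, not in the middle as in your construction, which is why the bases stay disjoint). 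Only afterwards does it perform a one-point-suspension-like contraction at a vertex $v$ of the \emph{other} base $B^-$, replacing the stars of $vv_1$ and $vv_2$ by the star of the single edge $v_1v_2$, to shave off one vertex without affecting the width argument. Your proposal skips the crucial first step, so you never have a valid prismatoid to which the (also incomplete) forced-switching width argument could apply.
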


Santos' original counterexample applies this result to a $5$-prismatoid with $48$ vertices and of width six, thus obtaining a non-Hirsch 23-polytope with 46 facets. This was improved in \cite{MSW15:improved} to a $5$-prismatoid of the same width but with only $25$ vertices, which provides a non-Hirsch polytope in dimension 20.

\section{Topological prismatoids and the topological strong $d$-step theorem}
\label{sec:tprismatoids}

\subsection{Prismatoids and flips in them}
\label{ssec:tprismatoids}

We now define the main object we work with:

\begin{definition}
\label{defi:tprismatoid}
  A \emph{($(d-1)$-dimensional) topological prismatoid} $\calC$ is a $(d-1)$-dimensional pure simplicial complex homeomorphic to $\mathbb{S}_{d-2}\times [0,1]$ (that is, it is homeomorphic to a cylinder), and such that every face with all its vertices in the same boundary component is a boundary face. Put differently, the two boundary components, each homeomorphic to $\mathbb{S}_{d-2}$, are induced subcomplexes.
\end{definition}

Bistellar flips were introduced for general manifolds in \cite{Pachner91:flips} and they are a standard tool in combinatorial topology by now, as local modifications that preserve the PL-type.
The main result of Pachner~\cite{Pachner91:flips} is the converse: every two PL-homeomorphic simplicial manifolds can be transformed into one another via a sequence of bistellar flips.
We here adapt the general definition to the case of topological prismatoids:

\begin{definition}\label{defi:bistellar}
  A \emph{ flip} in a topological prismatoid $\calC$ is a triple $(f,l,v)$ of pairwise disjoint
subsets of  $V(\mathcal{C})$ such that $f$ is a face, $l$ is a minimal nonface, $\link_\calC(f)=\partial(l)*v$, $v$ is either the empty face  or a vertex, and one of the following two things happens:
\begin{itemize}
  \item $|f|+|l|=d+1$ and $v=\emptyset$, in which case $l$ is required to intersect both bases of $\calC$. (Observe that in this case $\link_\calC(f)=\partial(l)*v=\partial(l)$).
  \item $|f|+|l|=d$ and $v$ is a vertex, in which case $f$ and $l$ are required to be contained in the base opposite to $v$.
\end{itemize}
In both cases, the result of the flip is the prismatoid
\[
\mathcal{C'} = \calC \setminus \st_\calC(f) \cup (l * \partial(f) * v).
\]
flips with $v=\emptyset$ are called \emph{interior flips} and flips where $v$ is a vertex are called  \emph{boundary flips}. The \emph{support} of the flip is $f\cup l \cup v$.
\end{definition}

Put differently, an $(f,l,v)$ flip removes all faces containing $f$ and inserts as new faces all subsets of $f\cup l\cup v$ that contain $l$ but not $f$. The interior flips do not change the boundary and are exactly the traditional bistellar flips; the main new feature of our definition is that we allow flips that change the boundary, but guaranteeing the two bases to still be induced subcomplexes after the flip. Observe that a boundary flip can remove or add a vertex. This happens when $f$ or $l$, respectively, have size $1$.

\begin{remark}
  In Definition~\ref{defi:bistellar}, $V(\mathcal{C})$ is understood as the \emph{ground set} of the prismatoid, which may contain points that are not used as vertices. In particular, a boundary flip may have $l=\{w\}$ for a $w$ that is not a vertex, and $f$ a facet in a base. The result of the flip is that this facet is stellarly subdivided with the new vertex $w$.

  We need this type of flips because we want flips to be reversible for the simulated annealing framework. These flips are the inverse of vertex-removing flips.
\end{remark}

An important feature used in our implementation is that knowing only the support $u$ of a flip we can recover the sets $f$, $l$ and $v$ and thus perform the flip:

\begin{itemize}
\item If $u$ has a single vertex from one of the bases then the flip is a boundary flip, and that vertex is $v$. Indeed, in an interior flip $l$ has at least one vertex from each component by definition, and $f$ has at least another from each base because the condition $\link(f)=\partial(l)$, with $|f|+|l|=d+1$, implies that $f$ is an interior face.

\item In both cases, the set $f\cup v$ equals the intersection of all facets of $\calC$ contained in $u$. This allows us to recover $f$, and hence $l$, once we know $v$ by the previous point.
\end{itemize}

The support $u$ of a flip must have $d+1$ vertices, since it is the vertex set of a $d$-ball of the form $ l * \partial(f) * v$, that is, the join of an $i$-simplex and  the boundary of a $j$-simplex, with $i+j=d-1$.

Moreover, the following result allows us to detect flips:

\begin{proposition}
  Given a set $u$ of $d+1$ vertices (or $d$ vertices and an unused point in the case of insertion flips) not all in one base, let $f$, $l$ and $v$ be as above.
The following conditions are necessary and sufficient for $u$ to support a flip in $(f,l,v)$:

\begin{enumerate}
  \item $u$ is the neighborhood of a ridge.
  \label{it:flip1}
\item $\text{neigh}(f)$ has $d+1$ vertices (or $d$ vertices in case of insertion flips).
  \label{it:flip2}
  \item $l$ is not a face of $\calC$.
  \label{it:flip4}
\end{enumerate}
\end{proposition}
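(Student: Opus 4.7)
The plan is to establish necessity and sufficiency separately, with necessity being a direct unpacking of Definition~\ref{defi:bistellar} and sufficiency requiring a more delicate structural argument.

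For necessity, assume $(f,l,v)$ is a valid flip with support $u = f \cup l \cup v$. The link condition $\link_{\calC}(f) = \partial(l) * v$ identifies the vertex set of $\st_{\calC}(f) = f * \partial(l) * v$ as $u$, giving condition 2. Condition 3 is immediate since $l$ is a minimal non-face. For condition 1, the complex $f * \partial(l) * v$ is a triangulated $(d-1)$-ball with at least two facets whenever $|l| \ge 2$; picking two adjacent ones exhibits an interior ridge of $\calC$ whose neighborhood is their union, which equals $u$. (The insertion case $|l|=1$ is read with $u$ minus its unused vertex being the vertex set of the unique facet $f*v$.)

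For sufficiency, I would assume conditions 1--3 and define $v$, $f$, $l$ from $u$ via the recovery recipe stated before the Proposition. Condition 1 yields two facets $F_1, F_2$ of $\calC$ contained in $u$, meeting in an interior ridge $r$ with $F_1 \cup F_2 = u$; both contain the intersection $f \cup v$ by construction, so $f$ is a face of $\calC$. Condition 2 upgrades the inclusion $u \subseteq \text{neigh}(f)$ (clear from $F_1, F_2 \in \st_{\calC}(f)$) to an equality, so every facet of $\calC$ containing $f$ is a $d$-subset of $u$ containing $f \cup v$, that is, of the form $u \setminus \{x\} = (f \cup v) \cup (l \setminus \{x\})$ for some $x \in l$. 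This already yields the inclusion $\st_{\calC}(f) \subseteq f * \partial(l) * v$.

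The main obstacle is the reverse inclusion: showing that every one of the $|l|$ candidate sets $(f\cup v) \cup (l \setminus \{x\})$ is actually a facet of $\calC$. I would argue this using the manifold property: the known facets $F_1, F_2$ form a partial subcomplex of $f * \partial(l) * v$, each interior ridge of which is shared with exactly one other facet of $\calC$, and condition 2 forces that other facet to lie in $u$ and therefore to be another of the candidate facets. Iterating this propagation around $\partial(l)$ recovers all $|l|$ of them, establishing $\link_{\calC}(f) = \partial(l) * v$. Condition 3 then guarantees that $l$ is a minimal non-face, since every proper subset of $l$ is contained in some candidate facet just exhibited. For boundary flips the induced-subcomplex property of the two bases has to be checked separately, but it follows because $v$ sits in the base opposite to the one containing $f$ and $l$, and the propagation above respects that separation.
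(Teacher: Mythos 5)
The paper does not actually print a proof of this proposition: it is stated as a detection criterion and the text moves directly to Section~3.2, so there is nothing in the paper to compare your argument against line by line. Your necessity direction is essentially fine. But in the sufficiency direction you have misdiagnosed where the work lies, and the argument you propose for the ``reverse inclusion'' does not hold up.

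You single out as ``the main obstacle'' the claim that each candidate set $(f\cup v)\cup(l\setminus\{x\})$, $x\in l$, is a facet of $\calC$, and you propose to establish it by propagating across interior ridges using the pseudomanifold property. In fact this claim is automatic from how $f$, $l$, $v$ are recovered from $u$. By definition $f\cup v$ is the \emph{intersection} of all facets of $\calC$ contained in $u$; hence for each $x\in l=u\setminus(f\cup v)$ there is some facet $F\subseteq u$ with $x\notin F$, and since $|u|=d+1$ while $|F|=d$ this forces $F=u\setminus\{x\}$. So every one of the $|l|$ candidate sets is a facet of $\calC$ simply because $x$ was excluded from the intersection. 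No propagation is needed; what condition~(2) is actually responsible for is the \emph{other} inclusion, namely that these are the \emph{only} facets in $\st_\calC(f)$, and that part of your argument is correct. Worse, the propagation argument is not merely superfluous but circular as sketched: to cross a ridge $u\setminus\{x,y\}$ you must know that ridge is interior, and in the interior-flip case ($v=\emptyset$) this requires $f$ to meet both bases --- but that is part of what one is trying to establish when verifying that $(f,l,\emptyset)$ is a legitimate flip, and your outline does not address the requirement from Definition~\ref{defi:bistellar} that $l$ (and hence implicitly the interiority of $f$) meet both bases. Replacing the propagation paragraph by the one-line observation above, and saying a word about the base conditions in Definition~\ref{defi:bistellar}, would close the gap.
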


\subsection{The strong $d$-step theorem for topological prismatoids}
\label{ssec:dstep}

We here prove the main theoretical result that allows us to use topological prismatoids to search for non-Hirsch spheres. The \emph{width} of $\calC$ is two plus the distance, in the adjacency graph, between the set of facets incident to one base and the set of facets incident to the other (the distance between two sets is, as customary, the minimum distance between respective elements).

\begin{theorem}[Strong d-step theorem for topological prismatoids]
\label{thm:dstep_tprism}
  Let $\calC$ be a topological prismatoid of dimension $(d-1)$, width $l$ and with $n>2d$ vertices.
  Assume that its two bases are polytopal.
  Then, there exists a topological $(n-d-1)$-prismatoid $\calC'$ with $2n-2d$ vertices and width at least $l+n-2d$.

In particular, if \,$l>d$ then $\calC'$ is a simplicial sphere of dimension $D-1:= n-d-1$, with $N:=2D=2n-2d$ vertices whose adjacency graph has diameter larger than $N-D$.
\end{theorem}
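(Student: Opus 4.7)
I would mirror Santos' polytopal proof of the strong $d$-step theorem for prismatoids (Theorem~2.6 of \cite{San12:counterexample}) at the combinatorial level, by induction on $n - 2d$. The base case $n = 2d$ is immediate: take $\calC' = \calC$. For the inductive step, I would construct an auxiliary topological $d$-prismatoid $\tilde\calC$ with $n+1$ vertices, polytopal bases, and width at least $l+1$; applying the inductive hypothesis to $\tilde\calC$ then produces an $(n-d-1)$-prismatoid $\calC'$ with $2(n-d)$ vertices and width $\ge (l+1) + ((n+1) - 2(d+1)) = l + n - 2d$, as required.

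The construction of $\tilde\calC$ is a combinatorial analogue of ``one-point suspension at a vertex of a base.''  Pick a vertex $v$ in one of the two bases of $\calC$, selected (across all $n-2d$ iterations) so that the final $\calC'$ ends up with exactly $n-d$ vertices in each base. Using the polytopality of the bases, replace $v$ with two new vertices $v_1, v_2$ following the one-point-suspension rule from \S\ref{ssec:hirsch}, and lift the interior of $\calC$ so that $\tilde\calC$ becomes a $d$-dimensional cylinder whose two boundary $(d-1)$-spheres are obtained from $\calC^+$ and $\calC^-$ by compatible one-point suspensions. Three items then need to be checked: (a)~$\tilde\calC$ is a topological $d$-prismatoid; (b)~both of its bases are polytopal; (c)~$\mathrm{width}(\tilde\calC) \ge l+1$. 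Claim (b) follows because the one-point suspension of a polytopal sphere is polytopal (\S\ref{ssec:hirsch}). Claim (c) follows by projecting any short adjacency path in $\tilde\calC$ between its two bases down to a path in $\calC$ via the identification $v_1, v_2 \mapsto v$, shortening by at most one, exactly as in the polytopal argument.

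The main obstacle is point (a): verifying that the combinatorially defined $\tilde\calC$ is PL-homeomorphic to $\mathbb S^{d-1} \times [0,1]$ with both bases induced subcomplexes. This is precisely where the polytopality hypothesis on the bases of $\calC$ is used, as it provides the geometric data needed to carry out the suspension coherently throughout the interior of the cylinder (the naive one-point suspension of $\calC$ at a boundary vertex is only homeomorphic to $\Sigma(\mathbb S^{d-2}\times I)$, which is not a cylinder, so the interior lift must be designed carefully to retain the product structure).

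For the ``in particular'' conclusion, after $n-2d$ iterations each base of $\calC'$ is an $(n-d-2)$-sphere on exactly $n-d$ vertices, hence combinatorially the boundary of an $(n-d-1)$-simplex. Adjoining these two filling simplices as top-dimensional facets of $\calC'$ produces a simplicial $(n-d-1)$-sphere $\calS$ with $N = 2(n-d)$ vertices; any adjacency path in $\calS$ between the two cap simplices restricts to a path in $\calC'$ between facets incident to opposite bases, so $\mathrm{diam}(\calS) \ge \mathrm{width}(\calC') \ge l + n - 2d$. When $l > d$ this strictly exceeds $N - D = n - d$, giving the claimed non-Hirsch sphere.
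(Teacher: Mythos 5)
You've set up the induction correctly, the parameter bookkeeping in the inductive step is right, and the concluding ``in particular'' argument is sound. However, the heart of the proof---the explicit construction of the intermediate $d$-dimensional prismatoid $\tilde\calC$ on $n+1$ vertices with polytopal bases and width $\ge l+1$---is missing. You rightly observe that the naive one-point suspension of $\calC$ at a boundary vertex produces a ball, not a cylinder, and you flag point~(a) as the obstacle; but ``lift the interior so that $\tilde\calC$ becomes a cylinder whose boundary spheres are compatible one-point suspensions of $\calC^+$ and $\calC^-$'' is a restatement of the goal, not a construction. Worse, that description cannot be made self-consistent: if you replace a single vertex $v$ of $B^+$ by two new vertices $v_1,v_2$ that stay on the $B^+$ side, then $B^-$ keeps its old vertex set and remains of dimension $d-2$, so it cannot serve as a $(d-1)$-dimensional base; if instead $v_1,v_2$ go to the $B^-$ side, $B^+$ loses a vertex. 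Asking \emph{both} bases to become one-point suspensions while adding only one vertex overall does not work.

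The construction the paper actually uses is different in an essential way, and it is there---not in any ``interior lift''---that polytopality of the bases enters. First, choose the base that is not a simplex, say $B^+$, and extend it to a polytopal $(d-1)$-sphere $S^+$ on the \emph{same} vertex set: realize $B^+$ as the boundary of a $d$-polytope $P$, lift the vertices of $P$ by a generic height function, and let $S^+$ be the boundary of the convex hull of the lifted points. Then $B^+$ splits $S^+$ into two $(d-1)$-balls $S^+_1,S^+_2$. Taking two new vertices $v_1,v_2$, form the union of the cone on $\calC\cup S^+_1$ with apex $v_1$ and the cone on $\calC\cup S^+_2$ with apex $v_2$; this is a topological $d$-prismatoid with bases $S^+$ and the ordinary suspension $B^-*\{v_1,v_2\}$, on $n+2$ vertices, and the width argument you sketch already goes through for it (ignoring the trivial steps $F*v_1\leftrightarrow F*v_2$). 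Finally, one vertex is removed by picking $v\in B^-$, noting that $\link(vv_1)=\link(vv_2)=\link_\calC(v)$ in this complex, and replacing the stars of $vv_1$ and $vv_2$ by the star of $v_1v_2$, which turns the base $B^-*\{v_1,v_2\}$ into the one-point suspension of $B^-$ at $v$. So a one-point suspension does appear---but only on one base, and only after the sphere-extension step, which your proposal omits; that omission is exactly the gap.
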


\begin{remark}
\rm
The excess of the non-Hirsch sphere produced via Theorem~\ref{thm:dstep_tprism} from a topological $(d-1)$-prismatoid $\calC$ of width $l$ and $n$ vertices equals
\[
\frac{l-d}{n-d}.
\]
Thus, we call that quotient the \emph{(prismatoid) excess} of $\calC$.
\end{remark}

\begin{proof}
  By induction on $n-2d$ it suffices to construct a prismatoid of dimension $d$ with $n+1$ vertices, width at least $l+1$, and such that its bases are polytopal. Repeating this procedure $n-2d$ times we arrive at a $(D-1)$-prismatoid with $2D$ vertices. In such a prismatoid the bases are simplices, so the prismatoid is a $(D-1)$-sphere with $2D$-vertices and diameter at least $l + (n-2d)$.

  For the inductive step, let $B^+$ and $B^-$ be the two bases of $\calC$. Since $\calC$ has more than $2d$ vertices, at least one of them (say $B^+$) is not a simplex. Let $S^+$ be a simplicial polytopal $(d-1)$-sphere containing $B^+$ and with no additional vertices. $S^+$ exists since $B^+$ is polytopal: Let $P\subset\R^d$ be a $d$-polytope realizing $B^+$ and choose a sufficiently generic lifting function $h:\vertices(P) \to \R$. Then $S^+$ can be chosen to be the boundary complex of $\conv\{(v,h(v)) : v\in \vertices(P)\}$. 
  
  Let $S^+_1$ and $S^+_2$ be the two closed $(d-1)$-balls whose intersection is $B^+$ and whose union is $S^+$. Let $v_1$ and $v_2$ be two additional vertices. Consider the following simplicial complex:
\[
\calC':= (\calC \cup B^+_1) * v_1 \cup (\calC \cup B^+_2) * v_2.
\]
$\calC'$ is a topological $d$-prismatoid with bases $S^+$ and the suspension $S^-:=B^- * \{v_1,v_2\}$ of $B^-$ (see Figure~\ref{fig:suspension}). It is not yet the prismatoid we want since it has $n+2$ vertices and we want only $n+1$. Later in the proof we show how to reduce the number of vertices by one, but for the time being let us not care about that. Instead, for reasons that will become apparent later, when computing the length of paths in the adjacency graph of $\calC'$ we will neglect the steps of the form $F*v_1$ to $F*v_2$ or vice versa, for facets $F$ of $\calC$. We claim that, even with this reduced way of counting steps, $\calC'$ has width strictly larger than $\calC$.

\begin{figure}
\input{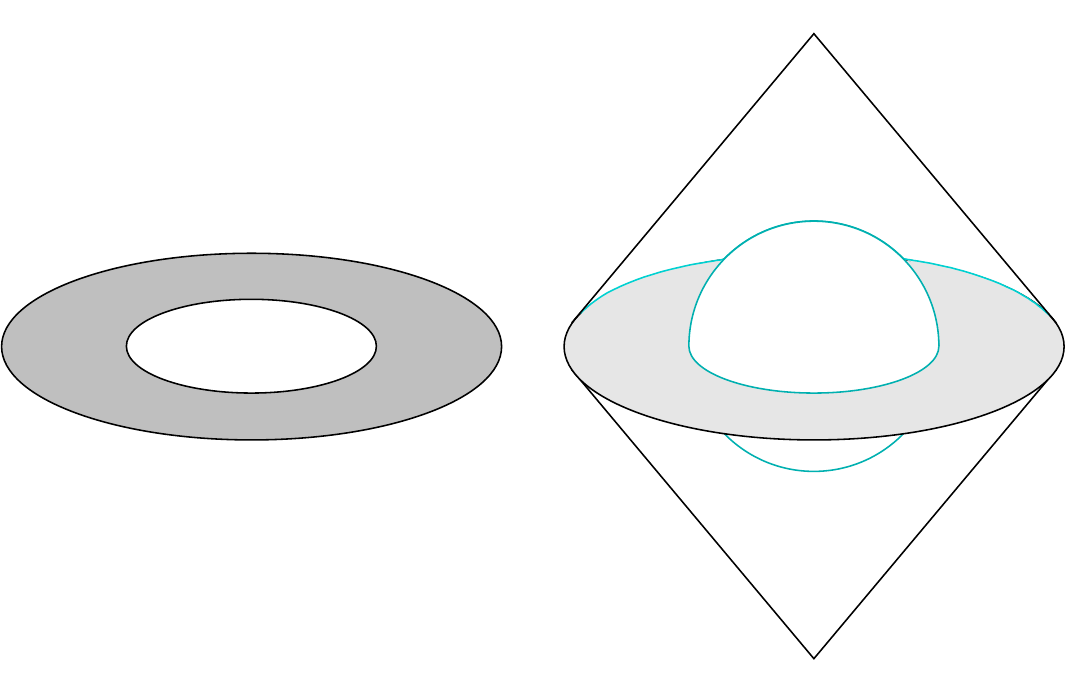_t}
\caption{Sketch of the construction of the prismatoid $\calC'$ (right) from $\calC$ (left) in the proof of Theorem~\ref{thm:dstep_tprism}}
\label{fig:suspension}
\end{figure}

To prove the claim,  let $F'_0,\dots,F'_t$ be a path in $\calC'$ from a facet $F'_0$ adjacent to $S^-$ to a facet $F'_t$ adjacent to $S^+$.
  Since we want our path as short as possible, there is no loss of generality in assuming that $F'_t$ is the only facet adjacent to $S^+$. That is, each facet $F'_i$, $i\in\{0,\dots,{t-1}\}$, is of the form $F_i * v_j$ for a certain facet $F_i$ of $\calC$ and $j\in \{1,2\}$. We have that:
\begin{itemize}
\item Facets $F_i$ and $F_{i+1}$ are either adjacent or the same; the latter happens if and only if $F'_i$ and $F'_{i+1}$ are of the form $F*v_1$ and $f*v_2$ for the same facet $F$ of $\calC$.
\item The first facet $F_0$ is adjacent to $B^-$, since $F'_0$ is adjacent to $S^-= B^- * \{v_1,v_2\}$.
\item The last facet $F_{t-1}$ is adjacent to $B^+$, since $F'_{t-1}$ is obtained from the facet $F'_t =F_t *v_j$ by changing a single vertex, and $F_t$ is a facet of $S^+$, not of $\calC$.
\end{itemize}

Thus, as claimed, the width of $\calC$ is strictly larger than that of $\calC$, even neglecting the steps $F*v_1\leftrightarrow F*v_2$.

We now get rid of one vertex without decreasing the width of $\calC'$. For this,  let $v$ be any vertex of $B^-$ and observe that
\[
\link_{\calC'}(vv_1)=
\link_{\calC'}(vv_2)=
\link_{\calC}(v).
\]
This implies that we can substitute in $\calC'$ the stars of edges $vv_1$ and $vv_2$ by the star of a single edge $v_1v_2$, to obtain a topological prismatoid with one less vertex, that is, with $n+1$ vertices. More precisely, we let:
\[
\calC'':= \calC' \setminus (vv_1 * \link_{\calC}(v) \cup vv_2 * \link_{\calC}(v)) \cup v_1v_2 * \link_{\calC}(v).
\]
restricted to $S^-$, which was the suspension of $B^-$, this operation produces the one point suspension of $B^-$ at vertex $v$. Thus,
$\calC''$ is a prismatoid with bases $S^+$ and the one-point suspension of $B^-$ at $v$. It has $n+1$ vertices, dimension $d$, and it has the same width as $\calC'$ if we neglect the steps of the form $F*v_1\leftrightarrow F*v_2$; in particular, it has width strictly larger than that of $\calC$. $S^+$ is a polytopal $(d-1)$-sphere by the way we constructed it, and the other base is also polytopal since  one-point suspensions of polytopes are polytopes.
\end{proof}

\begin{remark}
\rm
\label{rem:polytopal_bases}
Analyzing the proof of Theorem~\ref{thm:dstep_tprism} the reader can check that the only place where we need the bases of $\calC$ to be polytopal is in order to construct the $(d-1)$-sphere $S^+$ from the base $B^+$ of $\calC$. That is, strictly speaking we do not need each base $B$ to be polytopal but only to be embeddable in a sphere of one more dimension without extra vertices (and we need to keep this property recursively until $B$ becomes a simplex).
We do not know whether all spheres have this property but we suspect not.
\end{remark}

\subsection{Prismatoids of large width via reduced incidence patterns}
\label{ssec:incidence}

In all previous constructions of non-Hirsch polytopes, the proof that the prismatoids to which Theorem~\ref{thm:dstep_tprism} is applied is non-$d$-step uses the following result. 

\begin{proposition}
\label{prop:incidence-pre}
Let $\calC$ be a (geometric or topological) prismatoid with bases $B^+$ and $B^-$. 
A necessary condition for $\calC$ to be $d$-step is that there are vertices $v\in B^+$ and $w\in B^-$ such that $vw$ is an edge and the star of $vw$ contains facets incident to both bases.
\end{proposition}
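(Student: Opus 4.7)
The plan is a short double-counting argument on any path that realizes the width of $\calC$. Since $\calC$ is $d$-step, its width $l$ satisfies $l\le d$, so I would fix a path $F_0, F_1, \ldots, F_{l-2}$ in the adjacency graph of $\calC$ with $F_0$ incident to $B^+$ and $F_{l-2}$ incident to $B^-$, whose length $l-2$ is at most $d-2$ by hypothesis. The first step would be to pin down the vertex composition of these two endpoint facets: since a facet of $\calC$ incident to $B^+$ contains a facet of the subcomplex $B^+$ (a $(d-2)$-simplex with all its vertices in $B^+$) together with one additional vertex, and since every face of $\calC$ all of whose vertices lie in $B^+$ is already a face of $B^+$, that extra vertex of $F_0$ must lie in $B^-$. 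Hence $F_0$ has exactly $d-1$ vertices in $B^+$ and one in $B^-$, and symmetrically $F_{l-2}$ has exactly one vertex in $B^+$ and $d-1$ in $B^-$.

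The crux would be a two-sided bound on $|F_0\cap F_{l-2}|$. On one side, the vertex counts just described force $|F_0\cap F_{l-2}\cap B^+|\le 1$ and $|F_0\cap F_{l-2}\cap B^-|\le 1$, so $|F_0\cap F_{l-2}|\le 2$. On the other side, two adjacent facets of $\calC$ share a ridge and hence differ in exactly one vertex; walking along the $l-2$ edges of the path can therefore delete at most $l-2$ distinct vertices of $F_0$, so $|F_0\cap F_{l-2}|\ge d-(l-2)\ge d-(d-2)=2$. Matching the two inequalities forces equality, and $F_0\cap F_{l-2}$ consists of exactly one vertex $v\in B^+$ and one vertex $w\in B^-$.

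With $v$ and $w$ identified, the proposition follows at once: $\{v,w\}\subseteq F_0$ is a $1$-face of the simplex $F_0$, hence an edge of $\calC$, and the facets $F_0, F_{l-2}\in \st_{\calC}(vw)$ realize the required incidences with $B^+$ and $B^-$ respectively. I do not foresee a genuine obstacle in this plan; the only point requiring care is unpacking ``facet incident to a base'' as ``containing a facet of that base'' so that the $(d-1,1)$/$(1,d-1)$ vertex split of $F_0$ and $F_{l-2}$ is forced. Once that is done, the matching upper and lower bounds on $|F_0\cap F_{l-2}|$ simultaneously deliver the edge and certify its star.
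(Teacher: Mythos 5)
Your proof is correct and is essentially the same argument the paper uses inside the proof of Proposition~\ref{prop:incidence} (the paper gives no standalone proof of this statement, citing \cite{MSW15:improved} instead, but the argument appears there): the paper notes that the forced $(d-1,1)$-to-$(1,d-1)$ switch of vertex types across a path of length at most $d-2$ can only happen if the single $B^-$-vertex of the first facet survives to the last facet and the single $B^+$-vertex of the last facet was already present in the first. Your two-sided bound $2 \le d-(l-2) \le |F_0\cap F_{l-2}| \le 2$ is the same observation made fully explicit (and, as a bonus, it shows that a $d$-step prismatoid has width exactly $d$); there is no gap.
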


This proposition is a rephrasing of~\cite[Proposition 2.1]{MSW15:improved}, and used in part (3) of~\cite[Lemma 5.9]{San12:counterexample}.
Observe that the necessary condition in the statement is exactly Claim 2 in the proof of Theorem~\ref{thm:dstep5}. 
Following~\cite{MSW15:improved} we introduce the following graph-theoretical way to visualize this property:

\begin{definition}
\label{defi:incidence}
Let $\calC$ be a topological prismatoid with bases $B^+$ and $B^-$.
The \emph{incidence pattern} of $\calC$  is the bipartite directed graph having a node for each vertex of $\calC$ with bipartition given by the bases and with the following arcs: for each $v\in B^+$ and $w\in B^-$ we have an arc $v\to w$ (resp., $w\to v$) if there is a facet $F$ in $\calC$ containing $vw$ and incident to $B^+$ (resp., incident to $B^-$). 
The \emph{reduced incidence pattern} is the subgraph induced by vertices that are not sources.
\end{definition}

In this language, Proposition~\ref{prop:incidence-pre} becomes:

\begin{proposition}[Topological version of \protect{\cite[Proposition 2.3 ]{MSW15:improved}}]
\label{prop:incidence}
Let $\calC$ be a topological prismatoid. If there is no directed cycle of length two (that is, a ``bidirectional arc'') in its reduced incidence pattern then $\calC$ is non-$d$-step.
\end{proposition}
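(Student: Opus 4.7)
The plan is to prove the contrapositive: assume $\calC$ is $d$-step, and produce a bidirectional arc in the reduced incidence pattern.

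The central preliminary fact is that no facet of $\calC$ lies entirely inside one base, since the bases are induced $(d-2)$-dimensional subcomplexes while each facet of $\calC$ is a $(d-1)$-simplex with $d$ vertices. Fix a shortest path $F_0, F_1, \dots, F_k$ in the adjacency graph from a facet $F_0$ incident to $B^+$ to a facet $F_k$ incident to $B^-$, and track the counts $a_i := |F_i \cap B^+|$ and $b_i := |F_i \cap B^-|$. Then $a_0 = d-1$, $a_k = 1$, $b_k = d-1$, $b_0 \le 1$, and both sequences change by at most one per step. The $a$-walk gives $k \ge d-2$ and the $b$-walk gives $k \ge d-1-b_0$; combined with the $d$-step hypothesis $k \le d-2$, we conclude $k = d-2$, $b_0 = 1$, and the walk is rigid: at every step a vertex of $B^+$ is removed and a vertex of $B^-$ is added, and in particular no ``interior'' (non-base) vertex appears along the path.

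Set $v^* := F_{d-2} \cap B^+$ and $w^* := F_0 \cap B^-$, each a single vertex by the rigidity. A short induction shows $v^*, w^* \in F_i$ for every $i$: going forward from $F_0$ the vertex dropped at each step is in $B^+$, so the $B^-$-vertex $w^*$ is never removed; going backward from $F_{d-2}$ the newly added vertex (in the forward direction) is in $B^-$, so if $v^* \in F_i$ it must already belong to $F_{i-1}$. Therefore $\{v^*, w^*\}$ is an edge of $\calC$ lying in both $F_0$ and $F_{d-2}$. Since $F_0$ is incident to $B^+$, this edge yields the arc $v^* \to w^*$ in the incidence pattern, and since $F_{d-2}$ is incident to $B^-$ it yields $w^* \to v^*$. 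Each endpoint receives an incoming arc from the other, so neither is a source and the bidirectional arc survives in the reduced incidence pattern, contradicting the hypothesis.

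The one delicate step is the argument that pins the shortest-path length at exactly $d-2$ and rigidifies the walk, which requires running the parallel counts on $a_i$ and $b_i$ in tandem to simultaneously rule out interior vertices along the path. Everything after that is straightforward vertex bookkeeping.
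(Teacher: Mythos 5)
Your proof is correct and takes essentially the same route as the paper: both prove the contrapositive by observing that a facet incident to $B^+$ has $d-1$ of its $d$ vertices in $B^+$, that a path of $d-2$ adjacency steps to a facet incident to $B^-$ must therefore trade a $B^+$-vertex for a $B^-$-vertex at every single step, and that consequently the lone $B^-$-vertex of the first facet and the lone $B^+$-vertex of the last persist through the entire path, producing the bidirectional arc. Your version simply spells out, via the parallel counts $a_i$ and $b_i$, the rigidity that the paper compresses into the phrase ``this can only happen if,'' and you explicitly note that both endpoints receive incoming arcs and hence survive the reduction.
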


\begin{proof}
Suppose $\calC$ is $d$-step, so that there is a sequence of facets $F_1,\dots,F_{d-1}$, each adjacent to the next, and with $F_0$ adjacent to $B^+$ and $F_1$ adjacent to $B^-$. In particular, $F_1$ consists of a vertex $v$ of $B^-$ and $d-1$ vertices of $B^+$, and $F_{d-1}$ consists of a vertex $w$ of $B^+$ and $d-1$ vertices of $B^-$. This can only happen if $v$ is already in $F_{d-1}$ and $w$ in $F_1$, so that $v$ and $w$ form a 2-cycle in the reduced incidence pattern.
\end{proof}

%
%

\begin{remark}
The absence of cycles of length two is sufficient but not necessary for being non-$d$-step. For example, two of the four small non-Hirsch prismatoids described in Section~\ref{sec:results} do have cycles of length two in their reduced incidence patterns.
\end{remark}

Using the fact that in a reduced incident pattern without cycles of length two all vertices must have out-degree at least two, the minimum possible patters were classified in \cite{MSW15:improved}. The proof works without changes for topological prismatoids:

\begin{lemma}[\protect{\cite[Proposition 2.4]{MSW15:improved}}]
\label{lemma:patterns}
Let $\calC$ be a topological prismatoid whose reduced
incidence pattern has no cycles of length two. Then, the reduced incident pattern has at least eight vertices.

Moreover, the only two possible patterns with eight vertices are those of Figure~\ref{fig:patterns} (vertices of one base are white, vertices of the other are grey).
\end{lemma}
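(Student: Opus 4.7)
The plan is to follow the argument of \cite[Proposition~2.4]{MSW15:improved}, which is purely combinatorial at the level of the reduced incidence pattern and therefore transfers verbatim to the topological setting, since the only ``prismatoid-side'' input needed---namely, that every vertex of the reduced pattern has out-degree at least two in the absence of $2$-cycles---has just been supplied. Because of this, the main obstacle has in fact already been dispatched, and what remains is entirely a counting and case-analysis exercise.

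First I would set up a double counting of arcs. Let $a$ and $b$ denote the number of reduced-pattern vertices in $B^+$ and $B^-$, respectively. By the no-$2$-cycle hypothesis, each of the $ab$ pairs $(v,w)$ with $v\in B^+$ and $w\in B^-$ carries at most one arc, so the total number of arcs is at most $ab$. On the other hand, the out-degree-two property forces this count to be at least $2(a+b)$. Combining gives
\[
2(a+b)\le ab, \qquad \text{i.e.,}\qquad (a-2)(b-2)\ge 4.
\]
Assuming $a\le b$, a quick case-check rules out $a\le 2$ (the left-hand side is then non-positive), gives $b\ge 6$ and thus $a+b\ge 9$ if $a=3$, yields $b\ge 4$ and $a+b\ge 8$ if $a=4$, and yields $a+b\ge 2a\ge 10$ if $a\ge 5$. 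Hence the minimum is $a+b=8$, attained only at $a=b=4$.

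For the classification at exactly eight vertices, both bounds above must be tight: every pair $(v,w)$ carries exactly one arc, and every vertex has out-degree exactly $2$. Since each vertex on one side has exactly four pairs with the other side, the identity \textup{(in-degree)} $+$ \textup{(out-degree)} $=4$ then forces in-degree exactly $2$ as well. Viewing the arcs from $B^+$ to $B^-$ as undirected edges, we obtain a $2$-regular bipartite graph $H$ on $4+4$ vertices, which is a disjoint union of even cycles covering all eight vertices. The only partitions of $8$ into even parts of size $\ge 4$ are $8$ and $4+4$, so $H$ is either a single $8$-cycle or two disjoint $4$-cycles. Since the full digraph is recovered from $H$ by inserting the reverse arc at each non-edge, these two possibilities yield the two patterns shown in Figure~\ref{fig:patterns}, and they are non-isomorphic because one has a connected underlying graph and the other does not.
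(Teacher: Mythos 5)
Your proof is correct, and it reconstructs the argument that the paper itself does not repeat: the paper simply cites \cite[Proposition~2.4]{MSW15:improved} and observes that the proof carries over verbatim to the topological setting, precisely because, as you note, the only input from the prismatoid is the out-degree-$\ge 2$ fact stated just before the lemma. Your double count (arcs $\ge 2(a+b)$ from out-degrees, arcs $\le ab$ from the no-$2$-cycle hypothesis) gives $(a-2)(b-2)\ge 4$, and the case analysis correctly yields $a+b\ge 8$ with equality forcing $a=b=4$; the tightness argument then forces out- and in-degree exactly $2$ and reduces the classification to $2$-regular bipartite subgraphs of $K_{4,4}$, namely a single $8$-cycle or two disjoint $4$-cycles. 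This is in the spirit of the original argument.

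One small imprecision in the last sentence: you say the two patterns are non-isomorphic ``because one has a connected underlying graph and the other does not,'' but the underlying \emph{undirected} graph of the full digraph is $K_{4,4}$ in both cases (every pair carries exactly one arc). What you mean is that the spanning subgraph $H$ of arcs oriented from $B^+$ to $B^-$ is connected in one case and not in the other. That does distinguish the digraphs up to isomorphism---note that a digraph isomorphism may swap the two sides, replacing $H$ by its bipartite complement in $K_{4,4}$, but the bipartite complement of an $8$-cycle is again an $8$-cycle and the complement of two $4$-cycles is again two $4$-cycles---so the conclusion stands; just rephrase that clause.

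A minor degenerate case worth a half-sentence: the computation implicitly assumes $a,b\ge 1$; one should note that the reduced pattern of a prismatoid is nonempty (any facet adjacent to $B^+$ has $d-1$ vertices in $B^+$ and one vertex $w\in B^-$, producing arcs into $w$), so the bound $a+b\ge 8$ is not vacuous.
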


It is interesting to note that the prismatoids constructed in~\cite{MSW15:improved,San12:counterexample} have the reduced incidence pattern on the left of Figure~\ref{fig:patterns} while the ones we obtain in this paper are related to the pattern on the right. See Section~\ref{sec:results} for details.

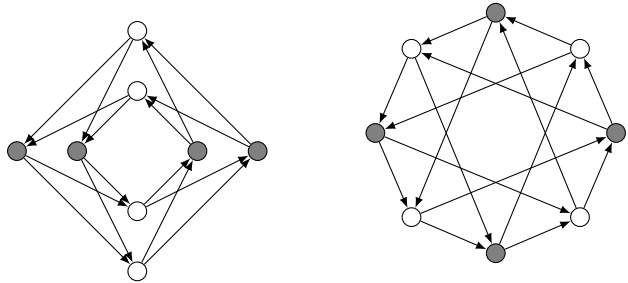
\begin{figure}
\centerline{

\begin{tikzpicture}[mywhite/.style={circle, inner sep=2.5pt, fill=white}, 
mygray/.style={circle, inner sep=2.5pt, fill=gray}, scale =.8 ]

  \node[draw,mygray, label=right:{}] (3) at (2,0) {};
  \node[draw,mygray, label=above:{}] (4) at (1,0) {};
  \node[draw,mygray, label=left:{}] (5) at (-1,0) {};
  \node[draw,mygray, label=below:{}] (6) at (-2,0) {};

  \node[draw,mywhite, label=right:{}] (d) at (0,2) {};
  \node[draw,mywhite, label=right:{}] (e) at (0,1) {};
  \node[draw,mywhite, label=left:{}] (f) at (0,-1) {};
  \node[draw,mywhite, label=left:{}] (g) at (0,-2) {};

  \draw[black, -latex] (3) -- (d);
  \draw[black, -latex] (3) -- (e);
  \draw[black, -latex] (4) -- (d);
  \draw[black, -latex] (4) -- (e);

  \draw[black, -latex] (d) -- (5);
  \draw[black, -latex] (d) -- (6);
  \draw[black, -latex] (e) -- (5);
  \draw[black, -latex] (e) -- (6);

  \draw[black, -latex] (5) -- (f);
  \draw[black, -latex] (5) -- (g);
  \draw[black, -latex] (6) -- (f);
  \draw[black, -latex] (6) -- (g);

  \draw[black, -latex] (f) -- (3);
  \draw[black, -latex] (f) -- (4);
  \draw[black, -latex] (g) -- (3);
  \draw[black, -latex] (g) -- (4);

 \end{tikzpicture}
 \qquad
 \begin{tikzpicture}[mywhite/.style={circle, inner sep=2.5pt, fill=white}, 
mygray/.style={circle, inner sep=2.5pt, fill=gray}, scale =.8 ]

  \node[draw,mygray, label=right:{}] (3) at (2,0) {};
  \node[draw,mygray, label=above:{}] (4) at (0,2) {};
  \node[draw,mygray, label=left:{}] (5) at (-2,0) {};
  \node[draw,mygray, label=below:{}] (6) at (0,-2) {};

  \node[draw,mywhite, label=right:{}] (d) at (1.4,1.4) {};
  \node[draw,mywhite, label=right:{}] (e) at (1.4,-1.4) {};
  \node[draw,mywhite, label=left:{}] (f) at (-1.4,-1.4) {};
  \node[draw,mywhite, label=left:{}] (g) at (-1.4,1.4) {};

  \draw[black, -latex] (3) -- (d);
  \draw[black, -latex] (e) -- (3);
  \draw[black, -latex] (f) -- (3);
  \draw[black, -latex] (3) -- (g);

  \draw[black, -latex] (d) -- (4);
  \draw[black, -latex] (e) -- (4);
  \draw[black, -latex] (4) -- (f);
  \draw[black, -latex] (4) -- (g);

  \draw[black, -latex] (d) -- (5);
  \draw[black, -latex] (5) -- (e);
  \draw[black, -latex] (5) -- (f);
  \draw[black, -latex] (g) -- (5);

  \draw[black, -latex] (6) -- (d);
  \draw[black, -latex] (6) -- (e);
  \draw[black, -latex] (f) -- (6);
  \draw[black, -latex] (g) -- (6);

 \end{tikzpicture}
}
\caption{The two minimal reduced incidence patterns without cycles of length two}
\label{fig:patterns}
\end{figure}

\subsection{Shellability of (topological) prismatoids}
\label{ssec:shelling}

The following concept of shelling for prismatoids is a special case of shelling of  \emph{relative simplicial complexes}, in the sense of~\cite[Section 4.2]{IKN15:balanced}. Indeed, $\calC$ is shellable from $B^+$ to $B^-$ in the sense of Definition~\ref{defi:shellable} if and only if the relative complex $(\calC, B^+)$ is shellable.

\begin{definition}
\label{defi:shellable}
Let $\calC$ be a topological prismatoid with bases $B^+$ and $B^-$. A \emph{prismatoid shelling of $\calC$ from $B^+$ to $B^-$} is an ordering $F_1,\dots, F_K$ of the facets of $\calC$ with the following property: for each $i=1,\dots,K$, the intersection of $|F_i|$ with $|B^+| \cup |F_1|,\cup \dots \cup |F_{i-1}|$ is a $(d-2)$-ball in the boundary complex of $F_i$. Here, the notation $| \cdot |$ applied to a subset of vertices means the subcomplex induced by them.
\end{definition}

\begin{proposition}
\label{prop:shellable}
Shellability is preserved under the ``strong $d$-step construction" of Theorem \ref{thm:dstep_tprism}.
\end{proposition}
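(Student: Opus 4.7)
My plan is to construct an explicit prismatoid shelling of $\calC''$ starting from $S^+$, using the given shelling $F_1,\ldots,F_K$ of $\calC$ from $B^+$ to $B^-$. First I would choose the polytopal top extension $S^+$ of $B^+$ together with a decomposition $S^+ = S^+_1 \cup S^+_2$ (with $S^+_1 \cap S^+_2 = B^+$) so that both halves are shellable $(d-1)$-balls: since $B^+$ is polytopal, a line shelling of the polytope realizing $S^+$ along a direction transverse to $B^+$ shells the two halves consecutively, so each half is shellable on its own.

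The shelling of $\calC''$ then proceeds in three stages: (i) shell the cone $S^+_1 * v_1$ following a shelling of $S^+_1$; (ii) shell $S^+_2 * v_2$ following a shelling of $S^+_2$; (iii) run through $F_1,\ldots,F_K$ in the given shelling order of $\calC$, adding for each $F_i$ either the pair of facets $F_i * v_1$, $F_i * v_2$ (when $v \notin F_i$) or the single merged facet $(F_i \setminus v) * v_1 * v_2$ (when $v \in F_i$). Stages (i) and (ii) are valid shellings because a cone over a shellable ball is shellable.

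For stage (iii), let $A_i := F_i \cap (|B^+| \cup |F_1| \cup \ldots \cup |F_{i-1}|)$ be the $(d-2)$-ball by which $F_i$ attaches in the shelling of $\calC$. When $v \notin F_i$, the intersection of $F_i * v_1$ with the already-built complex simplifies to the cone $A_i * v_1$, a $(d-1)$-ball on $\partial(F_i * v_1)$; the subsequent addition of $F_i * v_2$ gives intersection $F_i \cup A_i * v_2$, again a $(d-1)$-ball as a union of two $(d-1)$-balls meeting along the $(d-2)$-ball $A_i$.

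The main obstacle is the case $v \in F_i$. Writing $L_i := F_i \setminus v$ and $C_{12} := \bigcup_{j<i,\, v \in F_j} (F_i \cap F_j) \setminus \{v\}$, a direct computation shows that the intersection of the merged facet $L_i * v_1 * v_2$ with the previously-built complex equals
\[
T_i \;=\; \del_v(A_i) * \partial(v_1 v_2) \;\cup\; C_{12} * (v_1 v_2),
\]
where $\partial(v_1 v_2) = \{\{v_1\},\{v_2\}\}$ denotes the $0$-sphere. The key point is that $C_{12}$ coincides with $\link_{A_i}(v)$ and also with the shelling attachment of $L_i$ in the induced order $L_1,\ldots,L_m$ on the facets of $\link_\calC(v)$ obtained by restricting the shelling of $\calC$ to $v$-containing facets. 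Since $v$ lies in the boundary component $B^-$ of $\calC$, the link $\link_\calC(v)$ is a $(d-2)$-ball, and the restricted sequence is a shelling of this ball: the attachment $E_k := L_k \cap (L_1 \cup \cdots \cup L_{k-1})$ is a $(d-3)$-ball, since if it equaled the full sphere $\partial L_k$ the link would close into a $(d-2)$-sphere, contradicting that $v \in B^-$. Hence $C_{12}$ is a $(d-3)$-ball, $v$ lies on $\partial A_i$, and $\del_v(A_i)$ is a $(d-2)$-ball. The two pieces $\del_v(A_i) * \partial(v_1 v_2)$ (a suspension of a $(d-2)$-ball) and $C_{12} * (v_1 v_2)$ (a join of a $(d-3)$-ball with an edge) are each $(d-1)$-balls, meeting in the $(d-2)$-ball $C_{12} * \partial(v_1 v_2)$ that lies on the boundary of both. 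Their union $T_i$ is therefore a $(d-1)$-ball in $\partial(L_i * v_1 * v_2)$, completing the verification that stage (iii) is a valid shelling.
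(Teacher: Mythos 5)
Your proposal follows the same overall shelling order as the paper: line-shell $S^+$ so that $S^+_1$ precedes $S^+_2$, shell the cones $S^+_1 * v_1$ and $S^+_2 * v_2$, and then run through $F_1,\dots,F_K$ in the shelling order of $\calC$. The verification that each of these stages attaches along a $(d-1)$-ball is also implicit in the paper's intent, so the construction is the same.

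Where you go genuinely further is in handling the one-point suspension. The paper's three-line proof describes a shelling of the intermediate complex with $n+2$ vertices, listing $F_i*v_1$ and $F_i*v_2$ separately for every $F_i$, and never addresses what happens when the stars of $vv_1$ and $vv_2$ are merged into the star of $v_1v_2$ to form the actual output $\calC''$ of Theorem~\ref{thm:dstep_tprism}. You isolate this as the ``main obstacle,'' introduce the merged facet $L_i * v_1 * v_2$ for $v\in F_i$, compute its attachment $T_i = \del_v(A_i)*\partial(v_1v_2)\cup C_{12}*(v_1v_2)$, and prove it is a $(d-1)$-ball. The crucial lemma you supply --- that the prismatoid shelling of $\calC$ restricts to a shelling of $\link_\calC(v)$, whose attachment $E_k = \link_{A_i}(v) = C_{12}$ cannot be all of $\partial L_k$ because that would embed a $(d-2)$-sphere into the $(d-2)$-ball $\link_\calC(v)$ --- is the step that justifies both ``$v$ lies on $\partial A_i$'' and ``$C_{12}$ is a $(d-3)$-ball,'' neither of which appears in the paper. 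This makes your argument a complete proof of the statement as it actually applies to $\calC''$, whereas the paper's sketch only treats $\calC'$ and leaves the merged case to the reader. The one point worth spelling out slightly more is why a full-sphere attachment $E_k=\partial L_k$ forces a $(d-2)$-sphere subcomplex of $\link_\calC(v)$ (e.g.\ via the Euler characteristic of a shellable complex, or by noting that $\partial L_k$ already bounds a ball inside $L_1\cup\cdots\cup L_{k-1}$), but the claim itself is correct.
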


\begin{proof}
Let $\calC$ be a prismatoid with polytopal bases and let $\calC'$ be the prismatoid obtained from it in the proof of Theorem \ref{thm:dstep_tprism}. Let $F_1,F_2,\dots,$ is a shelling order of $\calC$.

By the way the sphere $S^+$ is constructed in that proof, there is a line shelling of $S^+$ that completely shells the half-sphere $S^+_1$ first, and then the half-sphere $S^+_2$. The shelling of $\calC'$ is then as follows: 
Following the shelling order of $S^+$, shell first $S^+_1*v_1$, then $S^+_2*v_2$. After that is done, do $F_1*v_1,  F_1*v_2, F_2*v_1, F_2*v_2,\dots,$.
\end{proof}

\begin{remark}
It is not clear to us whether the bases of a prismatoid that is shellable in the sense of Definition~\ref{defi:shellable}  have to be shellable themselves.
\end{remark}

\section{Metaheuristics and implementation}
\label{sec:implementation}

In this section, we show our approach to find non-$d$-step topological prismatoids with few vertices.

The general idea is to start with a $28$ vertices prismatoid as defined in~\cite{MSW15:improved}, and perform  flips on it attempting to remove vertices while preserving its width. A general, well known framework to do this is \emph{simulated annealing}.
We take this one, instead of the smaller one with $25$ vertices also constructed in \cite{MSW15:improved}, as a starting point because it  has much more symmetry.

Simulated annealing is a very common metaheuristic algorithm for optimization problems, used when we have a search space and an ``adjacency relation'' between pairs of feasible solutions.
The idea is to perform a random walk through the state graph of a problem, but favoring moves that improve the desired objective function over moves that do not.
 It has been used successfully in combinatorial topology to simplify simplicial complexes while preserving a condition (typically their homeomorphism type)~\cite{BjLu00:bistellar} or, in conjunction with other strategies, to tackle the problem of sphere recognition~\cite{JLT14:recognition, JLT14:recognition-abstract}.

There is a variable, the \emph{temperature}, regulating the probability assigned to each possible step as a function of how much it improves or worsens the objective. At higher temperatures the  choice is more random;  when the system cools down it converges to accepting only improving moves. Formally, the probability of accepting a step that increases cost by $\Delta c$ at temperature $t$ is:
\[
\left\{
  \begin{array}{cc}
    1 & \text{ if } \Delta c <0, \\ 
     \exp(-\Delta c/t) & \text{ if } \Delta c \geq 0.
  \end{array}\right.
\]
Note that it is very important to choose the potential step with uniform probability, among all the neighbor of the current state. This ``a priori'' probability distribution, together with the cooling schedule, produces an ``a posteriori'' probability distribution of performing the step. This gives higher probability to improving steps, but also gives chance to worsening steps at high temperatures.

That is, areas of the graph with smaller values of the objective function are more likely to be explored. As the temperature cools down, the random walk will focus on these areas and make optimizations with more detail. Loosely speaking, the first few iterations of the algorithm are more exploration-focused, and the last iterations are exploitation-focused.

Formally, simulated annealing requires the following aspects to be decided:
\begin{itemize}
  \item A \emph{state graph} representing the feasible states of the problem and an adjacency relation, plus an initial state.
  \item A \emph{cooling schedule}, that defines temperature as a function of time,   thus modulating the probability of acceptance of a cost-increasing step.
  \item An appropriate \emph{objective function} that we aim to minimize.
\end{itemize}

In our particular problem, our \emph{state graph} consists of all non-$d$-step topological $4$-prismatoids, with an edge between a pair of prismatoids if they differ by a flip.
This graph is undirected, since every flip is reversible by another flip.

\medskip
There is a lot of research on \emph{cooling schedules} for different problems. It is known that SA converges to the global optimum for a certain cooling schedule \cite{BerTsi1993}, but it is too slow for any practical application. Since the best schedule depends on the problem, several adaptative schedules have been proposed too \cite{Ing12:adaptativeSA}. However, the most common approach is to define a geometric cooling schedule, of the form $T_t=t_0*e^{st}$, where the parameters $t_0$ (initial temperature), $s$ (cooling speed) and the number of iterations are adjusted manually. Since the flipping operation is very fast, we have chosen a slow schedule with a high number of iterations.

The particular parameters have been obtained by trial and error. We used cooling schedule $T(k)=1000\cdot 0.99997^k$ and $500000$ iterations for each run.

\subsection{The objective function}
\label{ssec:objective}

The objective function guides our algorithm towards non-$d$-step topological prismatoids with few vertices. This is, prismatoids with many vertices should have higher cost.

A naive approach would be to just take the number of vertices as an objective function. But this objective function has large \emph{plateaus}, connected subgraphs of the state graph with constant number of vertices, and it does not push our state towards less vertices. So we have to find a way to break ties between prismatoids with the same number of vertices by giving less cost to prismatoids from which we expect it to be easier to remove vertices.
That is, the objective function we want has the number of vertices as a main component, plus a smaller (heuristic) \emph{tie-breaker} that pushes the algorithm in the right direction.

A flip that removes a vertex must be of type $(1,d)$; therefore, in order to perform it we must have a vertex with exactly $d+1$ neighbors, which is the smallest possible size for the neighborhood of a vertex in a prismatoid. So, a good approach is to try to reduce the size of the neighborhood of a vertex until it is precisely $d+1$.

However, just taking the size of the smallest neighborhood as a tie-breaker is not sensitive
enough because the algorithm will then not try to reduce the neighborhoods of other vertices.
For this reason, we use as a tie-breaker a generalized mean of the sizes of neighborhoods of vertices.
More precisely, the objective function that achieved the best performance in practice among the ones we tried is
\[
 \text{cost}(\mathcal{C}))= |V(\mathcal{C})| + \varepsilon\left(\frac{\sum_{v\in V(\mathcal{C})} |\text{neigh}(v)|^{-3}}{|V(\mathcal{C})|}\right)^{-1/3}.
 \]

\subsection{Data structures}
\label{ssec:data_str}

A proper ``topological prismatoid'' data structure for our problem needs to allow for the following operations:

\begin{itemize}
  \item Construction from the list of facets.
  \item Check if a set of vertices is a face.
  \item Iterate through the maximal subfaces of a face.
  \item Iterate through the minimal superfaces of a face.
  \item Perform a flip.
  \item Compute the width of the prismatoid.
  \item Get a valid random flip (with uniform probability).
\end{itemize}

We implement a prismatoid $\calC$ as a map of pairs (face,neighborhood), indexed by the faces in $\calC$. Faces and neighborhoods themselves are of type ``set of integers''. We also store the bases of $\calC$ in the same manner.

Observe this implicitly gives us the Hasse diagram (maximal subfaces and minimal superfaces): Each face $F$ is directly above those of the form $F\setminus\{v\}$ for $v\in F$, and directly below those of the form $F\cup \{w\}$ for $w\in\text{neigh}(F)$.
There is no need to store the adjacency graph, because it is implicit in the Hasse diagram.
The facets adjacent to a facet $F$ are computed from the neighborhood of the non-boundary ridges in $F$. Boundary and interior ridges are distinguished by their neighborhoods having $d+1$ and $d$ elements, respectively.

To compute and update the width we store, for each facet, the distance to the first base (chosen arbitrarily but once and for all) and the number of paths achieving that distance. In this way we do not need to explore again all facets to compute the new width after performing a flip, we just need to update the values that change. For this, when we perform the flip the new facets are inserted into a queue, and the distances are updated by cascading through the prismatoid.

\medskip

A flip is implemented simply by removing the old faces and inserting the new inserted faces. What is not so straightforward, and needs to be addressed, is how to implement an unbiased generation of random flips among all the possible ones.

For this, we imitate to some extent the technique used in polymake \cite{GaJo97:polymake}. In polymake, there is a set of pairs $(f,l)$, called ``options'' satisfying some conditions for flipability, in particular conditions \eqref{it:flip1} and \eqref{it:flip2}  of Section~\ref{ssec:tprismatoids}.
The flips are categorized by dimension of $f$. But the list of candidate pairs $(f,l)$ is very hard to update after a flip is performed. Among other things, some potential flips may change their $f$ and $l$ while preserving their support $f\cup l$.

Since the support of every flip is the neighborhood of a ridge, one could simplify this by using the list of ridges instead of the pairs $(f,l)$ as input to generate a random flip. But choosing randomly from the list of ridges creates bias: some flips are more likely than others since several ridges (actually $|l|$ of them) correspond to the same flip.

To avoid these drawbacks we store and update the list of  \emph{ridge-neighborhoods}. That is, for each ridge $F$ we store the facet $F_1$ containing $F$ or the union $F_1\cup F_2$ of the two facets containing $F$ depending on whether $F_1$ is in the boundary or the interior.
This is very easy to update, and it also makes it very easy to spot vertex-adding flips (which correspond to boundary ridge-neighborhoods and are characterized by having $d$ instead of $d+1$ elements). Since there is a bijection between flip-defining ridge-neighborhoods and flips, via the $(f,l,v)$ formalism introduced in Section~\ref{ssec:tprismatoids}, it is easy to choose flips uniformly at random: choose a random ridge-neighborhood and discard non-valid ones.

We find this approach more stable and requiring less changes to the data structure than the ones based on $(f,l)$ pairs or in ridges alone.

\section{Results. Small non-$d$-step prismatoids and spheres}
\label{sec:results}

As said in the previous section,  we ran our algorithm with cooling schedule $T(k)=1000\cdot 0.99997^k$ and $500000$ iterations for each run. We let it run for three days on an openSuse 42.3 Linux machine with 16 GB of RAM and an AMD Phenom X6 1090T processor, after which we had concluded $4093$ runs. We thus obtained $4093$ non-$d$-step topological $4$-prismatoids, with number of vertices ranging between $14$ and $28$. Figure~\ref{fig:histograms} shows the distribution we obtained for the number of vertices alone (top) and for number of vertices versus number of facets (bottom).

\begin{figure}
  \includegraphics[width=0.8\linewidth]{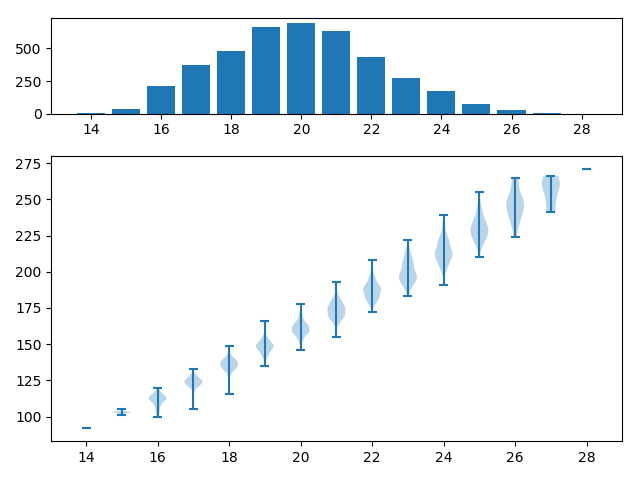}
\caption{Top: distribution of the $4093$ prismatoids by number of vertices. Bottom: distribution by number of vertices and facets. The initial prismatoid has $28$ vertices and $272$ facets.}
\label{fig:histograms}
\end{figure}

For the rest of this section we focus on the $4$ smallest examples, with $14$ vertices. 
We call them $\#1039$, $\#1963$, $\#2669$ and $\#3513$ since these are their indices among the $4093$ experiments that we did. 
They are listed in Tables~\ref{table:1039}--\ref{table:3513}. Vertices from one base are labeled $0$ to $6$
and vertices from the other are $a$ to $g$. In the tables, the facets of each prismatoid are grouped by layers, where a layer consists of all facets sharing the number of vertices they have from each  base.
It is remarkable that the four examples obtained have a lot of similarities:

\newcommand{\prismatoide}[8]{
  \begin{array}{c|ccc|ccc|c}
      \begin{array}{c} #1 \end{array}&
      \begin{array}{c} #2 \end{array}&
      \begin{array}{c} #3 \end{array}&
      \begin{array}{c} #4 \end{array}&
      \begin{array}{c} #5 \end{array}&
      \begin{array}{c} #6 \end{array}&
      \begin{array}{c} #7 \end{array}&
      \begin{array}{c} #8 \end{array}\\
  \end{array}
}

\begin{table}
    \footnotesize
$
\prismatoide
{
    0256g \\
    0245f \\
    1256g \\
    1245f \\
    0234e \\
    0123d \\
    1234e \\
    0126d \\
    0156g \\
    0145f \\
    0134e \\
          \\
}
{
    126cg \\
    015bg \\
    015bf \\
    014ae \\
    013ae \\
    013ad \\
    016cd \\
    016cg \\
    014bf \\
    014ad \\
    014cd \\
    014bc \\
}{
    123ae \\
    124af \\
    123af \\
    123bf \\
    123bg \\
    123cd \\
    123cg \\
    026bg \\
    026cd \\
    026ce \\
    026be \\
    126cd \\
}{
    025af \\
    025bg \\
    024af \\
    024ae \\
    025ae \\
    025be \\
    125bg \\
    125bf \\
    023ce \\
    023cd \\
    124ae \\
          \\
}
{
    14abf \\
    13abf \\
    13acd \\
    13acg \\
    13abg \\
    06bce \\
    04abd \\
    04bcd \\
    14bcg \\
    14abg \\
    14acg \\
    14acd \\
}{
    26abf \\
    26ace \\
    26abe \\
    05abf \\
    03ace \\
    05ace \\
    03acd \\
    05acd \\
    05cde \\
    05bde \\
    05abd \\
    04abf \\
}{
    25abf \\
    25abe \\
    01bcg \\
    06bcg \\
    26bcg \\
    23bcg \\
    23bcf \\
    23acf \\
    23ace \\
    26bcf \\
    26acf \\
          \\
}
{
    5acde \\
    5abde \\
    3bcfg \\
    3acfg \\
    3abfg \\
    0bcde \\
    6bcef \\
    6acef \\
    6abef \\
    4abcg \\
    4abcd \\
          \\
}
$

    \smallskip
    \caption{Prismatoid \#1039}
    \label{table:1039}
$
\prismatoide
{
    0126d \\
    0123d \\
    0134e \\
    0234e \\
    1234e \\
    0145f \\
    0245f \\
    1245f \\
    0156g \\
    0256g \\
    1256g \\
          \\
}
{
    014ae \\
    024ae \\
    013ae \\
    014af \\
    024af \\
    025af \\
    025ae \\
    124af \\
    013af \\
    016bd \\
    026bd \\
    013bd \\
}{
    023bd \\
    123bd \\
    023be \\
    026be \\
    123be \\
    124be \\
    124bd \\
    016bg \\
    013bg \\
    126cd \\
    124cd \\
    124ac \\
}{
    126cg \\
    125cg \\
    125cf \\
    015cf \\
    013cf \\
    015cg \\
    025cg \\
    025ce \\
    026cg \\
    026ce \\
    013cg \\
          \\
}
{
    14abe \\
    24abe \\
    24abd \\
    25abe \\
    25abd \\
    13abe \\
    03abe \\
    06abe \\
    03abf \\
    06abf \\
    13abg \\
    14abg \\
}{
    24acd \\
    12acf \\
    25acf \\
    25acd \\
    05acf \\
    05ace \\
    06ace \\
    06acf \\
    13acf \\
    16bcd \\
    14bcd \\
    26bcd \\
}{
    25bcd \\
    25bce \\
    26bce \\
    13acg \\
    14acg \\
    16bcg \\
    06bcg \\
    06bcf \\
    03bcf \\
    14bcg \\
    03bcg \\
          \\
}
{
    5abde \\
    6abef \\
    3abfg \\
    5acde \\
    6acef \\
    4abcd \\
    5bcde \\
    6bcef \\
    3acfg \\
    4abcg \\
    3bcfg \\
          \\
}
$

    \smallskip
    \caption{Prismatoid \#1963}
    \label{table:1963}
$
\prismatoide
{
    0156g \\
    0256g \\
    1256g \\
    0123d \\
    0126d \\
    0134e \\
    1234e \\
    0234a \\
    0145f \\
    0245f \\
    1245f \\
          \\
}
{
    023ad \\
    013ad \\
    026ad \\
    013ae \\
    034ae \\
    234ae \\
    123ae \\
    124ae \\
    026bg \\
    025bg \\
    125bg \\
    016cg \\
}{
    126cg \\
    015cg \\
    016cd \\
    126cd \\
    123cd \\
    123cg \\
    123ag \\
    124ag \\
    124bg \\
    015cd \\
    015ad \\
    015ae \\
}{
    024af \\
    026af \\
    026bf \\
    025bf \\
    125bf \\
    124bf \\
    015bf \\
    015be \\
    014bf \\
    014be \\
          \\
          \\
}
{
    24abg \\
    23abg \\
    13acg \\
    14acg \\
    13acd \\
    23acd \\
    26acd \\
    06acd \\
    14acd \\
    05acd \\
    05ace \\
    06ace \\
}{
    06bcg \\
    26bcg \\
    23bcg \\
    05bcg \\
    15bcg \\
    14bcg \\
    14bcd \\
    15bcd \\
    05bce \\
    06bce \\
    24abf \\
    23abf \\
}{
    04abf \\
    04abe \\
    06abe \\
    14abe \\
    14abd \\
    15abd \\
    15abe \\
    06abf \\
    23acf \\
    26acf \\
    23bcf \\
    26bcf \\
}
{
    5abde \\
    5acde \\
    5bcde \\
    4abcg \\
    4abcd \\
    3abfg \\
    6abef \\
    3acfg \\
    6acef \\
    3bcfg \\
    6bcef \\
          \\
}
$

    \smallskip
    \caption{Prismatoid \#2669}
    \label{table:2669}
$
\prismatoide
{
    0156g \\
    0256g \\
    1256g \\
    0134e \\
    0234e \\
    1234e \\
    0126d \\
    0123d \\
    0145f \\
    0245f \\
    1245f \\
          \\
}
{
    015ag \\
    025bg \\
    125bg \\
    026bg \\
    126bg \\
    016bg \\
    016bd \\
    026bd \\
    025bd \\
    015af \\
    014af \\
    014ag \\
}{
    014bg \\
    024af \\
    024ae \\
    124af \\
    124ae \\
    123ae \\
    025af \\
    025ae \\
    123af \\
    125bf \\
    126bf \\
    014bc \\
}{
    014ce \\
    013ce \\
    023ce \\
    025ce \\
    025cd \\
    023cd \\
    013cd \\
    123cd \\
    126cd \\
    126cf \\
    123cf \\
          \\
}
{
    04abg \\
    05abg \\
    15abg \\
    05abd \\
    04abd \\
    15abf \\
    25abf \\
    25abe \\
    26abf \\
    26abe \\
    13abf \\
    13abg \\
}{
    14bcg \\
    14acg \\
    14ace \\
    04ace \\
    05ace \\
    13ace \\
    13acg \\
    13bcg \\
    23ace \\
    05acd \\
    04acd \\
    01bcd \\
}{
    04bcd \\
    16bcd \\
    26bcd \\
    25bcd \\
    25bce \\
    26bce \\
    26ace \\
    26acf \\
    23acf \\
    16bcf \\
    13bcf \\
          \\
}
{
    5abde \\
    3abfg \\
    6abef \\
    4abcg \\
    4abcd \\
    5acde \\
    5bcde \\
    3acfg \\
    3bcfg \\
    6acef \\
    6bcef \\
          \\
}
$
    \smallskip
    \caption{Prismatoid \#3513}
    \label{table:3513}
\end{table}

\begin{itemize}
\item They have combinatorially isomorphic bases. Indeed, in all cases the list of facets of the bases are as follows. (To relate this to the tables, observe that the bases correspond to the first and last layer in the prismatoid, removing from each facet the unique vertex from the other base).
\[
\footnotesize
\begin{array}{ccccc}
\small
 &   012 3
    \\ 
        01 34 &
        02 34 &
        12 34
    \\
        01 45 &
        02 45 &
        12 45
    \\
        01 56 &
        02 56 &
        12 56
    \\
&    012 6
\end{array}
\quad\quad\quad
\text{\normalsize and }
\quad\quad\quad
\begin{array}{lll}
&    abc d \\
        ab de &
        ac de &
        bc de
    \\
        ab e\f &
        ac e\f &
        bc e\f
    \\
        ab \f g &
        ac \f g &
        bc \f g
    \\
&    abc g.
\end{array}
\]
Observe that both are the face complex of the stacked $4$-polytope with seven vertices.
That is, they are the boundaries of the stacked $4$-balls $\{01234, 01245, 01256\}$ and $\{abcde, abce\f, abc\f g\}$, respectively.

\item They have the same $f$-vector $(14,85,220,241,92)$. 

\item They are all shellable, with a shelling that is monotone on layers.
That is, no facet of one layer is used until finishing the previous layer.
In the tables, facets within each layer are given in a shelling order.

\item The vector of number of facets in different layers is the same $(11, 35, 35, 11)$, and symmetric, in three of them. In \#2669 we get the slightly asymmetric vector $(11, 34, 36, 11)$.

 \item
Their reduced incidence patterns, shown in Figure~\ref{fig:ifrightful_incidence}, are very similar.
In \#1963 and \#3513 the reduced incidence patterns coincide with the one in the right part of Figure \ref{fig:patterns}, minimal by Lemma~\ref{lemma:patterns}.
In \#1039 and \#2669 they are are almost the same, except each of them has a single ``outlier'' facet incident to a base ($0bcde$ in \#1039 and $0234a$ in \#2669) that introduces a new vertex in the pattern and creates directed cycles of length two. 
In particular, in these two Proposition~\ref{prop:incidence} is not enough to prove the non-$d$-step property.

Note that the starting prismatoid of the algorithm had \emph{the other} reduced incidence pattern of minimal size, the one in the left in Figure \ref{fig:patterns}.
\end{itemize}

We do not know whether any of the four is polytopal.

\begin{figure}
\begin{center}
\quad
\begin{tikzpicture}[mywhite/.style={circle, inner sep=2.5pt, fill=white}, 
mygray/.style={circle, inner sep=2.5pt, fill=gray}, scale =.7 ]

  \node[draw,mywhite, label=above:{3}] (3) at (1.4,1.4) {};
  \node[draw,mywhite, label=above:{4}] (4) at (-1.4,1.4) {};
  \node[draw,mywhite, label=below:{5}] (5) at (-1.4,-1.4) {};
  \node[draw,mywhite, label=below:{6}] (6) at (1.4,-1.4) {};

  \node[draw,mygray, label=above:{d}] (d) at (0,2) {};
  \node[draw,mygray, label=left:{e}] (e) at (-2,0) {};
  \node[draw,mygray, label=below:{f}] (f) at (0,-2) {};
  \node[draw,mygray, label=right:{g}] (g) at (2,0) {};  

%

  \draw[black, -latex] (3) -- (d);
  \draw[black, -latex] (3) -- (e);
  \draw[black, -latex] (f) -- (3);
  \draw[black, -latex] (g) -- (3);

  \draw[black, -latex] (d) -- (4);
  \draw[black, -latex] (4) -- (e);
  \draw[black, -latex] (4) -- (f);
  \draw[black, -latex] (g) -- (4);

  \draw[black, -latex] (d) -- (5);
  \draw[black, -latex] (e) -- (5);
  \draw[black, -latex] (5) -- (f);
  \draw[black, -latex] (5) -- (g);

  \draw[black, -latex] (6) -- (d);
  \draw[black, -latex] (e) -- (6);
  \draw[black, -latex] (f) -- (6);
  \draw[black, -latex] (6) -- (g);

  \node[label=right:{$\#1963\quad \#3513$}] (x) at (-2.1,-3.5) {};
 \end{tikzpicture}
 \ 
\begin{tikzpicture}[mywhite/.style={circle, inner sep=2.5pt, fill=white}, 
mygray/.style={circle, inner sep=2.5pt, fill=gray}, scale =.7 ]

  \node[draw,mygray, label=above:{d}] (3) at (0,2) {};
  \node[draw,mygray, label=left:{e}] (4) at (-2,0) {};
  \node[draw,mygray, label=below:{f}] (5) at (-0,-2) {};
  \node[draw,mygray, label=right:{g}] (6) at (2,0) {};

  \node[draw,mywhite, label= 45:{$0$}] (b) at (0,0) {};
  \node[draw,mywhite, label=above:{4}] (f) at (-1.4,1.4) {};
  \node[draw,mywhite, label=below:{5}] (e) at (-1.4,-1.4) {};
  \node[draw,mywhite, label=below:{6}] (d) at (1.4,-1.4) {};
  \node[draw,mywhite, label=above:{3}] (g) at (1.4,1.4) {};  

%

  \draw[black, -latex] (3) -- (b);
  \draw[black, -latex] (b) -- (3);
  \draw[black, -latex] (d) -- (3);
  \draw[black, -latex] (3) -- (e);
  \draw[black, -latex] (g) -- (3);
  \draw[black, -latex] (3) -- (f);

  \draw[black, -latex] (4) -- (b);
  \draw[black, -latex] (b) -- (4);
  \draw[black, -latex] (4) -- (d);
  \draw[black, -latex] (4) -- (e);
  \draw[black, -latex] (f) -- (4);
  \draw[black, -latex] (g) -- (4);

  \draw[black, -latex] (b) -- (5);
  \draw[black, -latex] (5) -- (d);
  \draw[black, -latex] (e) -- (5);
  \draw[black, -latex] (f) -- (5);
  \draw[black, -latex] (5) -- (g);

  \draw[black, -latex] (b) -- (6);
  \draw[black, -latex] (d) -- (6);
  \draw[black, -latex] (e) -- (6);
  \draw[black, -latex] (6) -- (f);
  \draw[black, -latex] (6) -- (g);

  \node[label=right:{$\#1093$}] (x) at ((-1.2,-3.5) {};
 \end{tikzpicture}
 \ 
\begin{tikzpicture}[mywhite/.style={circle, inner sep=2.5pt, fill=white}, 
mygray/.style={circle, inner sep=2.5pt, fill=gray}, scale =.7 ]

  \node[draw,mywhite, label=above:{3}] (3) at (1.4,1.4) {};
  \node[draw,mywhite, label=above:{4}] (4) at (-1.4,1.4) {};
  \node[draw,mywhite, label=below:{5}] (5) at (-1.4,-1.4) {};
  \node[draw,mywhite, label=below:{6}] (6) at (1.4,-1.4) {};

  \node[draw,mygray, label=right:{$a$}] (a) at (0,0) {};
  \node[draw,mygray, label=above:{d}] (g) at (0,2) {};
  \node[draw,mygray, label=left:{e}] (f) at (-2,0) {};
  \node[draw,mygray, label=below:{f}] (e) at (0,-2) {};
  \node[draw,mygray, label=right:{g}] (d) at (2,0) {};

%

  \draw[black, -latex] (3) -- (a);
  \draw[black, -latex] (a) -- (3);
  \draw[black, -latex] (d) -- (3);
  \draw[black, -latex] (e) -- (3);
  \draw[black, -latex] (3) -- (f);
  \draw[black, -latex] (3) -- (g);

  \draw[black, -latex] (4) -- (a);
  \draw[black, -latex] (a) -- (4);
  \draw[black, -latex] (d) -- (4);
  \draw[black, -latex] (4) -- (e);
  \draw[black, -latex] (4) -- (f);
  \draw[black, -latex] (g) -- (4);

  \draw[black, -latex] (a) -- (5);
  \draw[black, -latex] (5) -- (d);
  \draw[black, -latex] (5) -- (e);
  \draw[black, -latex] (f) -- (5);
  \draw[black, -latex] (g) -- (5);

  \draw[black, -latex] (a) -- (6);
  \draw[black, -latex] (6) -- (d);
  \draw[black, -latex] (e) -- (6);
  \draw[black, -latex] (f) -- (6);
  \draw[black, -latex] (6) -- (g);

  \node[label=right:{$\#2669$}] (x) at (-1.2,-3.5) {};
 \end{tikzpicture}
\end{center}
\caption{The reduced incidence patterns of the four smallest non-$d$-step $4$-prismatoids}
\label{fig:ifrightful_incidence}
\end{figure}
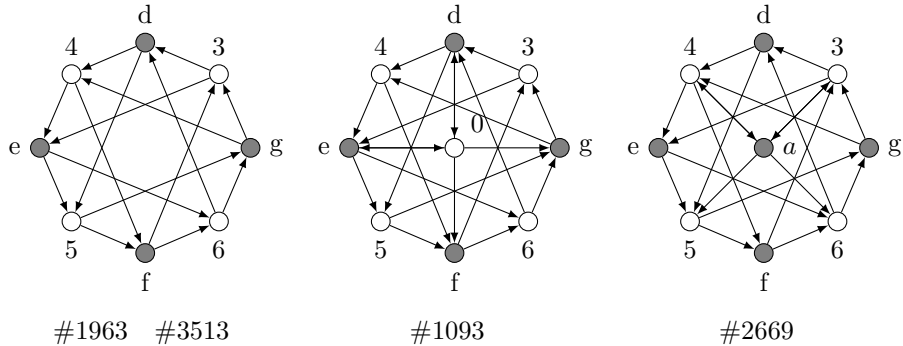

\end{document}